\documentclass[11pt]{amsart}
\usepackage{amssymb, amsmath, amscd, eucal, rotating}
\usepackage{pstricks,pstricks-add,pstcol}
\usepackage{color}

\definecolor{darkgreen}{rgb}{0, 0.45, 0}
\definecolor{lightgreen}{rgb}{0.5, 1, 0.5}

\input xy
 \xyoption{all}

\oddsidemargin=-5 true mm
\evensidemargin=-5 true mm
\textwidth=6.5 true in
\textheight=215 true mm



\numberwithin{equation}{section}

\theoremstyle{plain}

\newtheorem{proposition}{Proposition}[section]
\newtheorem{theorem}[proposition]{Theorem}		
		
\newtheorem{corollary}[proposition]{Corollary}
\newtheorem{lemma}[proposition]{Lemma}

\theoremstyle{definition}

\newtheorem{definition}[proposition]{Definition}
\newtheorem{remark}[proposition]{Remark}
\newtheorem{example}[proposition]{Example}



\newcommand{\C}{\mathbb C}
\newcommand{\R}{\mathbb R}

\newcommand{\Gr}{\mathop{\rm Gr}\nolimits}

\newcommand{\End}{\mathop{\rm End}\nolimits}

\newcommand{\dist}{\mathop{\rm dist}\nolimits}

\newcommand{\Ad}{\mathop{\rm Ad}\nolimits}

\newcommand{\tr}{\mathop{\rm Tr}\nolimits}

\newcommand{\SL}{\mathsf{SL}}

\newcommand{\GL}{\mathsf{GL}}

\newcommand{\U}{\mathsf{U}}

\DeclareMathOperator{\id}{id}

\DeclareMathOperator{\rank}{rank}
\DeclareMathOperator{\slope}{slope}

\DeclareMathOperator{\Res}{Res}
\DeclareMathOperator{\cotanh}{cotanh}


\setcounter{tocdepth}{2}



\begin{document}


\title{Analytic convergence of harmonic metrics for parabolic Higgs bundles}

\author{Semin Kim}
\address{Department of Mathematics, Brown University, Providence, RI 02912}
\email{smkim@math.brown.edu}

\author{Graeme Wilkin}
\address{Department of Mathematics, National University of Singapore, Singapore 119076}
\email{graeme@nus.edu.sg}

\thanks{This research was partially supported by grant number R-146-000-200-112 from the National University of Singapore. The authors also acknowledge support from NSF grants DMS 1107452, 1107263, 1107367 ``RNMS GEometric structures And Representation varieties'' (the GEAR Network). The first author was also supported in part by the Samsung Scholarship.}

\begin{abstract}
In this paper we investigate the moduli space of parabolic Higgs bundles over a punctured Riemann surface with varying weights at the punctures. We show that the harmonic metric depends analytically on the weights and the stable Higgs bundle. This gives a Higgs bundle generalisation of a theorem of McOwen on the existence of hyperbolic cone metrics on a punctured surface within a given conformal class, and a generalisation of a theorem of Judge on the analytic parametrisation of these metrics.
\end{abstract}

\date{\today}





\maketitle


\thispagestyle{empty}

\baselineskip=16pt



\section{Introduction}

The uniformisation theorem shows that any compact Riemann surface admits a metric of constant scalar curvature within each conformal class. One way of proving this is to solve the resulting partial differential equation for the conformal factor, which was carried out by Berger when the Euler characteristic is nonpositive \cite{Berger69}. More generally, by solving this PDE with a different curvature function, Kazdan and Warner \cite{KazdanWarner74-compact} gave necessary and sufficient conditions for a given function to be equal to the scalar curvature of some metric within a given conformal class, generalising the sufficient conditions given by Berger \cite{Berger71}.

It was originally observed by Hitchin \cite{Hitchin87} that this PDE can be solved in the general framework of the Hitchin-Kobayashi correspondence for Higgs bundles. The theory of Hitchin \cite{Hitchin87} and Simpson \cite{Simpson88} shows that each stable Higgs bundle admits a unique metric solving the self-duality equations. Hitchin observed in \cite[Sec. 11]{Hitchin87} that, for a particular example of stable Higgs bundle (a \emph{Fuchsian point} in the moduli space), the metric solving the self-duality equations is related to a metric solving the constant scalar curvature equations on the underlying compact Riemann surface. Moreover, by deforming the Higgs field one can obtain all of the constant curvature metrics on the underlying smooth surface, which leads to Hitchin's construction of Teichm\"uller space for genus $g \geq 2$. The key idea is to find a stable Higgs bundle for which the Hitchin-Kobayashi correspondence produces the required metric, instead of solving the PDE for the metric directly.

Subsequently Simpson \cite{Simpson90} showed that a stable parabolic Higgs bundle with regular singularities admits a unique metric solving the self-duality equations on a punctured surface. An important aspect of the theory for noncompact surfaces is the need to control the growth of the metric near the punctures. These growth conditions are determined by the stability condition in the form of weights (introduced by Mehta and Seshadri \cite{MehtaSeshadri80} for parabolic bundles without a Higgs field) and by the weight filtration, which depends on the residue of the Higgs field at the punctures. Biswas, Ar{\'e}s-Gastesi and Govindarajan \cite{Biswas97-1} showed that Simpson's theory for parabolic Higgs bundles can be used to produce constant curvature cusp metrics on punctured surfaces, and they extended this construction to the higher Teichm\"uller theory introduced by Hitchin \cite{Hitchin92}.

With respect to metrics with a conical singularity at the punctures, results of McOwen \cite{McOwen88} prove the existence of hyperbolic cone metrics of any cone angle within a given conformal class, and subsequent work of Judge \cite{Judge98} shows that these metrics depend analytically on the cone angles. Both of these proofs are in the spirit of the approach of Berger \cite{Berger69}, \cite{Berger71} and Kazdan-Warner \cite{KazdanWarner74-compact} which involves solving a PDE for the conformal factor. Troyanov \cite{Troyanov91} also used this approach to give necessary and sufficient conditions for a function to be the curvature of a metric with conical singularities within a given conformal class. A Higgs bundle approach to constructing cone metrics for the case where the cone angles of the form $\frac{2\pi}{m}$ for $m \in \mathbb{Z}$ was developed by Nasatyr and Steer \cite{NasatyrSteer95}, who studied orbifold Higgs bundles on a finite ramified cover of the underlying compact Riemann surface. Nasatyr and Steer also proved a connection between orbifold Higgs bundles and parabolic Higgs bundles with rational weights and trivial weight filtration.

In this paper we investigate parabolic Higgs bundles of varying weights and study the dependence of the harmonic metric on the weight and the Higgs bundle. The following theorem is the main result, which uses the Hitchin-Kobayashi correspondence for parabolic Higgs bundles (\cite[Thm. 6]{Simpson90}) to prove a Higgs bundle generalisation of Judge's theorem.

\begin{theorem}[(Theorem \ref{thm:analytic-parametrisation})]
For an initial stable parabolic Higgs bundle, the metric solving the self-duality equations depends analytically on the choice of weights and stable Higgs bundle in a neighbourhood of the initial weight and Higgs bundle. 
\end{theorem}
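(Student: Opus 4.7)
The plan is to apply the real-analytic implicit function theorem to the self-duality equation
\begin{equation*}
F_H + [\phi, \phi^{*_H}] = 0,
\end{equation*}
viewed as a nonlinear equation for the Hermitian metric $H$, with parameters given by the weights $\alpha$ and the underlying parabolic Higgs bundle $(\bar\partial_E, \phi)$. Fix the initial stable parabolic Higgs bundle $(\bar\partial_0, \phi_0)$ with weights $\alpha_0$ and harmonic metric $H_0$, whose existence is provided by Simpson's theorem \cite{Simpson90}. For nearby data $(\bar\partial_0 + \beta,\, \phi_0 + \psi,\, \alpha)$, I would seek a solution in the form $H = H_0 e^s$ with $s$ a section of the bundle of $H_0$-self-adjoint endomorphisms of $E$, turning the self-duality equation into a nonlinear elliptic equation $\mathcal{F}(s;\beta,\psi,\alpha) = 0$. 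The theorem will follow if $\mathcal{F}$ extends to a real-analytic map of Banach spaces near $(0;0,0,\alpha_0)$ whose linearisation in $s$ at the initial point is an isomorphism.

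The central technical issue is that the natural weighted Sobolev spaces controlling the growth of $s$ at each puncture depend on the weights $\alpha$, which themselves vary. To circumvent this, I would introduce at each puncture a diagonal real gauge transformation whose exponents absorb the difference $\alpha - \alpha_0$, so that the perturbed parabolic metric is replaced by one with the same model growth as $H_0$. After this normalisation, $s$ lies in a fixed weighted Sobolev space adapted to $\alpha_0$, while the gauge-transformed holomorphic structure, Higgs field, and induced Chern Laplacian depend real-analytically on $(\beta,\psi,\alpha)$ near the initial data, so that $\mathcal{F}$ becomes an analytic map between fixed Banach spaces.

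The linearisation of $\mathcal{F}$ in $s$ at the initial solution is the Hessian of Donaldson's functional at its minimum $H_0$, of schematic form
\begin{equation*}
L_0(s) \;=\; \Delta_{A_0} s \;+\; [\phi_0,[\phi_0^{*_{H_0}}, s]] \;+\; [\phi_0^{*_{H_0}},[\phi_0, s]] \;+\; (\text{zero order}),
\end{equation*}
which is formally non-negative. Between appropriately chosen weighted Sobolev spaces it is Fredholm, provided the shift exponents avoid the indicial roots of $L_0$ determined by $\alpha_0$ and the residue of $\phi_0$. Any kernel element commutes with $\phi_0$ and is parallel for the Chern connection of $H_0$, so stability of $(\bar\partial_0,\phi_0)$ forces it to be a scalar multiple of the identity; normalising by $\det H = \det H_0$ removes this trace direction and renders $L_0$ invertible. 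The analytic implicit function theorem then produces a unique analytic map $(\beta,\psi,\alpha) \mapsto s(\beta,\psi,\alpha)$, which yields the claimed analytic dependence.

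The principal obstacle is the coherent choice of weighted Sobolev spaces: one must pick shift exponents that simultaneously (i) admit $H_0$ and the prescribed parabolic growth at $\alpha_0$ as an interior point, (ii) are compatible with the weight-absorbing gauge transformation, and (iii) render $L_0$ Fredholm and invertible uniformly for weights in a small neighbourhood of $\alpha_0$. Condition (iii) reduces to an explicit indicial-root calculation at each puncture in terms of $\alpha_0$ and the eigenvalues of the residue of $\phi_0$, and the exponents must be chosen away from this discrete set. One must also verify that the ansatz $H = H_0 e^s$ with $s$ in the chosen space captures every parabolic harmonic metric for the perturbed data, so that no additional branch of solutions escapes the implicit function theorem.
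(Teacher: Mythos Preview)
Your proposal is correct and follows essentially the same strategy as the paper: both apply the analytic implicit function theorem to the self-duality equation, write the perturbed metric as the initial harmonic metric times a factor absorbing the change in model weights (your diagonal gauge transformation corresponds to the paper's factor $h_{mod,\mu_0}^{-1} h_{mod,\mu}$), identify the linearisation as $(D_{\mu_0}')^* D_{\mu_0}'$, and use stability to kill its kernel modulo scalars. The only substantive difference is that the paper invokes Biquard's ready-made weighted Sobolev machinery and Fredholm theorem \cite{Biquard97} in place of your proposed indicial-root analysis, and quotients by constant scalar multiples of the identity rather than fixing the determinant.
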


Judge's theorem then appears in the case of a fixed Higgs bundle given by a Fuchsian point in the rank $2$ moduli space (Corollary \ref{cor:Higgs-Judge}). Along the way we obtain a new Higgs bundle proof of McOwen's theorem (Corollary \ref{cor:mcowen}), thus generalising the results of Nasatyr and Steer to metrics with arbitrary cone angles.

\noindent {\bf Organisation of the paper.} In Section \ref{sec:definitions} we recall the necessary definitions and results for parabolic Higgs bundles from \cite{MehtaSeshadri80} and \cite{Simpson90} and recall the necessary results on weighted Sobolev spaces from \cite{Biquard97} that will be used in the proof of the main theorem. In Section \ref{sec:local-study} we define a model rank $2$ harmonic bundle on a punctured disk associated to a given cone angle $\theta$. The holonomy of the associated flat connection around the puncture corresponds to an elliptic element of $\SL(2, \C)$. As the cone angle converges to zero we show that the harmonic cone metrics converge to the cusp metric studied by Simpson \cite[Sec. 5]{Simpson90} and that (modulo gauge) the corresponding sequence of holonomy representations given by elliptic elements of $\SL(2, \C)$ converges to a parabolic element of $\SL(2, \C)$. Applying Simpson's nonabelian Hodge theorem \cite{Simpson90} then gives us a new proof of McOwen's theorem (Corollary \ref{cor:mcowen}).

In Section \ref{sec:convergence} we globalise the results of Section \ref{sec:local-study} and show in Theorem \ref{thm:analytic-parametrisation} that for an initial algebraically stable parabolic Higgs bundle, the harmonic metric on $E$ depends analytically on the weights and stable Higgs bundle. In particular, as a sequence of weights converges to a fixed weight, then the harmonic metrics, flat connections and holonomy representations converge. Restricting to the rank $2$ case gives a new Higgs bundle proof of Judge's theorem \cite{Judge98}.

\noindent {\bf Acknowledgements.} This project began as part of a summer REGS program funded by the GEAR network. We would also like to thank George Daskalopoulos for his advice and encouragement. The first author would like to thank the Institute for Mathematical Sciences at the National University of Singapore for their hospitality during the program ``The Geometry, Topology and Physics of Moduli Spaces of Higgs Bundles", and the second author would like to thank Brown University for their hospitality.

\section{Background and definitions}\label{sec:definitions}

\subsection{Parabolic bundles and model metrics}\label{subsec:parabolic-def}

In this section we recall some basic notions of parabolic vector bundles and Higgs bundles from \cite{MehtaSeshadri80} and \cite{Simpson90} which are relevant to the rest of the paper. Since we use the results of Simpson from \cite{Simpson90} throughout the rest of the paper, then we will also follow the terminology and notation from \cite{Simpson90} in Definitions \ref{def:filtered-Higgs}--\ref{def:analytic-degree} below.

Let $\bar{X}$ be a compact Riemann surface with marked points $\{ p_1, \ldots, p_n \}$ and let $X = \bar{X} \setminus \{p_1, \ldots, p_n \}$. Let $i : X \rightarrow \bar{X}$ denote the inclusion. We will use a Riemannian metric on $\bar{X}$ in the conformal class determined by the complex structure to define the distance from each marked point. For simplicity, in the sequel we will state the definitions for the case of one marked point $p$; the general case follows in exactly the same way.

Let $E \rightarrow X$ be a holomorphic vector bundle. The notion of a filtered bundle from \cite{Simpson90} involves a choice of extension of $E$ across the puncture $p$.

\begin{definition}[Filtered regular Higgs bundle]\label{def:filtered-Higgs}
A \emph{filtered vector bundle} is an algebraic vector bundle $E \rightarrow X$ together with a one-parameter family of vector bundles $E_\alpha \rightarrow \bar{X}$ indexed by $\alpha \in \R$ such that $E = i^* E_\alpha$ for all $\alpha$ and
\begin{itemize}

\item $E_\alpha$ is a subsheaf of $E_\beta$ for each $\alpha \geq \beta$,

\item for each $\alpha$ there exists $\varepsilon' > 0$ such that $E_{\alpha - \varepsilon} = E_\alpha$ for all $0 < \varepsilon < \varepsilon'$, and

\item $E_{\alpha+1} = E_\alpha[-p]$ for all $\alpha$.

\end{itemize}

A \emph{filtered regular Higgs bundle} $(E, \phi, \{ E_\alpha \})$ is a filtered vector bundle $(E, \{ E_\alpha \})$ together with a section $\phi \in H^0(\End(E_0) \otimes K_{\bar{X}}[p])$ such that $\phi$ preserves the subsheaf $E_\alpha \subset E_0$ for each $\alpha \in (0,1]$.
\end{definition}

The equivalence of this definition with the definition of a parabolic structure from \cite{MehtaSeshadri80} is given as follows. Given a filtered bundle $\{ E_\alpha \}_{\alpha \in \R}$, let $E_{p, 0}$ denote the fibre of $E_0 \rightarrow \bar{X}$ over $p \in \bar{X}$. Then the vector space $E_{p,0}$ has an induced filtration $\{ E_{p,\alpha} \}$ indexed by $0 \leq \alpha < 1$. For each $\alpha$, define $\Gr_\alpha(E_{p,0})$ to be the direct limit of the system $E_{p, \alpha} / E_{p, \beta}$ over all $\beta > \alpha$. The \emph{weights} of the parabolic structure are the values of $\alpha$ in $[0,1)$ such that $\dim_\C \Gr_\alpha(E_{p,0}) > 0$. In the sequel we will use $\alpha$ or $\beta$ to denote the weights of a given parabolic structure, and $\mu$ or $\nu$ to denote the set $\{ \alpha_1, \ldots, \alpha_n \}$ of weights counted with multiplicity.

In a neighbourhood $U$ of $p$ with coordinate $z$ such that $p$ corresponds to $z=0$, the Higgs field locally has the form $\varphi(z) z^{-1} dz$, where $\varphi$ is a holomorphic endomorphism of $\left. E_0 \right|_U$. The \emph{residue} of $\phi$ at $p$ is defined to be $\Res_p \phi : = \varphi(0)$. The condition that $\phi$ preserves the subsheaf $E_\alpha \subset E_0$ for each $\alpha \in (0,1]$ implies that the residue of $\phi$ respects the filtration $\{ E_{p, \alpha} \}$ defined above. In the following a \emph{parabolic Higgs bundle} will refer to the triple $((\bar{\partial}_A, \phi), \mu, \{E_{p, \alpha} \})$, where $\bar{\partial}_A$ is the holomorphic structure on $E_0$, $\phi \in H^0(\End(E_0) \otimes K_{\bar{X}}[p])$, $\mu$ is the set of weights and $\{ E_{p, \alpha} \}$ denotes the filtration on the fibre $E_p$ over the marked point.

\begin{definition}[Algebraic stability]\label{def:algebraic-stability}
Given a filtered regular Higgs bundle $(E, \phi, \{E_\alpha\})$ on $X$, the \emph{algebraic degree} is
\begin{equation}\label{eqn:algebraic-degree}
\deg(E, \phi, \{E_\alpha\}) := \deg(E_0) + \sum_{0 \leq \alpha < 1} \alpha \dim_\C(\Gr_\alpha(E_{p,0})) .
\end{equation}
The filtered regular Higgs bundle $(E, \phi, \{E_\alpha\})$ is \emph{algebraically stable} (resp. \emph{semistable}) iff for all filtered regular Higgs subbundles $(F, \phi, \{F_\alpha\}) \subset (E, \phi, \{E_\alpha\})$ we have
\begin{equation}\label{eqn:algebraic-slope-stability}
\frac{\deg(F, \phi, \{F_\alpha\})}{\rank(F)} < \frac{\deg(E, \phi, \{E_\alpha\})}{\rank(E)} \quad \text{(resp. $\leq$)} .
\end{equation}
\end{definition}

A filtered regular Higgs bundle together with the notion of algebraic stability is a purely algebraic object. In order to relate these objects to flat connections and representations of $\pi_1(X)$, we need to use a Hermitian metric on the bundle. For a noncompact surface, this requires the imposition of growth conditions at the marked points (cf. \cite[Sec. 3]{Simpson90}).
\begin{definition}[Acceptable metric]\label{def:acceptable-metric}
Let $E \rightarrow X$ be a holomorphic bundle with a smooth Hermitian metric $h$, and let $F_h$ denote the curvature of the Chern connection. Let $U \subset X$ be a neighbourhood of the marked point $p \in \bar{X}$ with coordinate $r$ denoting the distance from $p$. The metric $h$ on $E$ is \emph{acceptable} if $| F_h |_h \leq f + \frac{1}{r^2 (\log r)^2}$ for some $f \in L^q$ with $q > 1$.
\end{definition}
In \cite[Sec. 10]{Simpson88} (see also \cite[Prop. 3.1]{Simpson90}), Simpson proves that if the metric $h$ is acceptable then there is a filtered bundle $(E, \phi, \{ E_\alpha \})$ associated to $(E, \phi, h)$, where the germs of sections of $E_\alpha$ at the puncture $p$ are local sections $s$ of $E$ satisfying the growth condition 
\begin{equation}\label{eqn:growth-condition}
|s|_h \leq C r^{\alpha - \varepsilon}
\end{equation}
for all $\varepsilon > 0$. The acceptability of the metric guarantees that the sheaves $E_\alpha$ will be coherent. Moreover, Simpson also shows in \cite[Thm. 2]{Simpson90} that if the harmonic bundle is \emph{tame} (the growth of the eigenvalues of the Higgs field is bounded by a constant times $\frac{1}{r}$ in a neighbourhood of the puncture) then the harmonic metric is acceptable.

\begin{definition}[Analytic degree]\label{def:analytic-degree}
Given a holomorphic bundle $E \rightarrow X$ with an acceptable Hermitian metric $h$, let $F_h$ denote the curvature of the Chern connection. The \emph{analytic degree} is
\begin{equation*}
\deg(E, h) = \int_{X} \tr(F_h) .
\end{equation*}
\end{definition}

Simpson proves in \cite[Lem. 6.1]{Simpson90} that the analytic and algebraic degree are equal if the metric $h$ is acceptable. With respect to the analytic degree, one can define analytic slope stability in an analogous way to \eqref{eqn:algebraic-slope-stability}. Simpson proves in \cite[Lem. 6.3]{Simpson90} that the two definitions of stability are equivalent if the metric is acceptable.

Now we describe the model asymptotic behaviour of the metrics near the marked point $p$, following \cite[Sec. 2]{Biquard97} and \cite[Sec. 7]{Simpson90}. Since $\Res_p \phi$ preserves the filtration $\{ E_{p, \alpha} \}$ then the graded pieces decompose as a direct sum $\Gr_\alpha(E_{p,0}) = \bigoplus_\lambda \Gr_\alpha^\lambda(E_{p,0})$ according to the generalised eigenspaces of $\Res_p \phi$, and the residue induces a nilpotent endomorphism $Y_\alpha$ on each $\Gr_\alpha(E_{p,0})$ by taking the upper triangular part of each Jordan block. The $Y_\alpha$ then induces a further filtration $\{ W_k \Gr_\alpha^\lambda(E_{p,0}) \}_{k \in \mathbb{Z}}$ called the \emph{weight filtration}, with corresponding grading 
\begin{equation}\label{eqn:weight-grading}
\Gr_\alpha(E_{p,0}) = \bigoplus_{k \in \mathbb{Z}} \bigoplus_\lambda \Gr_k \Gr_\alpha^\lambda(E_{p,0}) 
\end{equation}
such that $Y_\alpha ( \Gr_k \Gr_\alpha^\lambda) \subset \Gr_{k-2} \Gr_\alpha^\lambda$. Therefore if we define the diagonal endomorphism $H_\alpha = \bigoplus_{k \in \mathbb{Z}} k \cdot \id_{\Gr_k \Gr_\alpha^\lambda}$ then $[H_\alpha, Y_\alpha] = -2 Y_\alpha$. Then there exists an endomorphism $X_\alpha$ such that $(H_\alpha, X_\alpha, Y_\alpha)$ are the generators of a representation of $\mathfrak{sl}_2$ on $\Gr_\alpha^\lambda(E_{p,0})$, i.e. we also have $[H_\alpha, X_\alpha] = 2X_\alpha$ and $[X_\alpha, Y_\alpha] = H_\alpha$. 

Now choose an initial metric $h_p$ on $E_{p,0}$ such that the subspaces $\Gr_\alpha(E_{p,0})$ are orthogonal and such that $H_\alpha^* = H_\alpha$ and $Y_\alpha^* = X_\alpha$. Given a trivialisation of $E_0 \rightarrow \bar{X}$ in a neighbourhood $U$ of $p$ with a projection $\pi : U \rightarrow \{p\}$, we can pullback by $\pi$ to extend the weight filtration, the grading \eqref{eqn:weight-grading} and the $\mathfrak{sl}_2$ representation to this neighbourhood. Let $r$ denote the distance to the marked point $p$ in the neighbourhood $U$.

\begin{definition}[Model metric near a marked point]\label{def:model-metric}
The \emph{model metric} on $\left. E \right|_{U \setminus \{p\}}$ is defined with respect to the grading \eqref{eqn:weight-grading} by
\begin{equation}\label{eqn:model-metric}
h_{mod} = \bigoplus_{k, \alpha, \lambda} r^{2 \alpha} \left| \log r \right|^k \cdot \left( \left. \pi^* h_p \right|_{\Gr_k \Gr_\alpha^\lambda} \right) 
\end{equation}
\end{definition}

\begin{example}
The two basic building blocks for the weight filtration are described by Simpson in \cite[Sec. 5]{Simpson90}. The first is where $\dim_\C \Gr_\alpha^\lambda = 1$ and $k = 0$ from \cite[p745]{Simpson90}. The second is where $\dim_\C \Gr_\alpha^\lambda = 2$, the weight is $\alpha = 0$ and the residue of $\phi$ is nilpotent, therefore the eigenvalue is $\lambda = 0$. This determines a decomposition $E_{p,0} = E_{p,0}^{1,0} \oplus E_{p,0}^{0,1}$ and a nilpotent endomorphism $Y : E_{p,0}^{1,0} \rightarrow E_{p,0}^{0,1}$. Then the graded object of the weight filtration has two pieces $E_{p,0}^{1,0}$ (corresponding to $k=1$) and $E_{p,0}^{0,1}$ (corresponding to $k=-1$) so the model metric has the form $h_{mod} = |\log r|$ on $E_{p,0}^{1,0}$ and $h_{mod} = |\log r|^{-1}$ on $E_{p,0}^{0,1}$. The general case described above is given by taking symmetric powers and tensor products of these two basic examples.
\end{example}

\begin{definition}[Bounded distance between metrics]
Given two metrics $h, k$ on $E \rightarrow X$, we say that \emph{$h$ is bounded with respect to $k$} if $\sup_X \dist_{G/K}(h, k) < \infty$, where $\dist_{G/K}$ refers to the geodesic distance in the symmetric space $G/K$.
\end{definition}

\begin{remark}
It will be useful in the next section to note that for $r \in \left(0,\frac{1}{2} \right)$, the metric $h(r) = - \frac{1}{\theta} \sinh(\theta \log r) = \frac{1}{2 \theta} \left( r^{-\theta} - r^\theta \right)$ with values in $\mathbb{R}_{>0} \cong \GL(1,\C) / \U(1)$ is a bounded distance from the metric $r^{-\theta}$.
\end{remark}

The following is the nonabelian Hodge theorem for parabolic Higgs bundles on Riemann surfaces from \cite[Thm. 6]{Simpson90} (see also \cite[Thm. 8.1]{Biquard97} for the higher dimensional case).

\begin{theorem}\label{thm:nonabelian-hodge}
Let $\bar{X}$ be a compact Riemann surface and let $X = \bar{X} \setminus \{p_1, \ldots, p_m \}$. Let $(E, \phi, \{E_\alpha\})$ be an algebraically stable filtered regular Higgs bundle. Then there exists a Hermitian-Einstein metric $h$ on $E$ such that in a neighbourhood $U_j$ of each marked point $p_j$ there exists a finite $C_j > 0$ such that $\sup_{U_j} \dist(h, h_{mod}) < C_j$.
\end{theorem}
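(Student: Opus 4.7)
The plan is to follow Simpson's proof strategy from \cite[Thm.~6]{Simpson90}, adapted to the punctured surface setting using the weighted analysis of \cite{Biquard97}. First I would construct a reference metric $h_0$ on $E \to X$ which equals the model metric $h_{mod}$ on a small punctured neighbourhood of each marked point and extends smoothly over the compact part of $X$. By construction the filtered bundle associated to $(E, \phi, h_0)$ via the growth condition \eqref{eqn:growth-condition} agrees with the given $\{E_\alpha\}$, and a direct computation using the $\mathfrak{sl}_2$ structure shows the curvature $F_{h_0}$ satisfies $|F_{h_0}|_{h_0} \in L^\infty_{loc} \cap L^q$ on a punctured neighbourhood, so $h_0$ is acceptable in the sense of Definition~\ref{def:acceptable-metric}.

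Next I would look for the Hermitian-Einstein metric in the form $h = h_0 \cdot e^s$, where $s$ is a self-adjoint endomorphism of $E$ (self-adjoint with respect to $h_0$) lying in an appropriate weighted Sobolev space from \cite{Biquard97}. The self-duality equation becomes
\begin{equation*}
\sqrt{-1} \Lambda F_{h_0 e^s} + [\phi, \phi^*_{h_0 e^s}] = \lambda \cdot \id,
\end{equation*}
where $\lambda$ is determined by the degree. I would solve this by the continuity method: introduce the perturbed equation
\begin{equation*}
\sqrt{-1} \Lambda F_{h_0 e^s} + [\phi, \phi^*_{h_0 e^s}] - \lambda \cdot \id + \varepsilon s = 0,
\end{equation*}
solve it for each $\varepsilon > 0$ by ellipticity on weighted Sobolev spaces (with openness from the implicit function theorem and closedness from elliptic regularity applied to self-adjoint endomorphisms decaying near the punctures), and then take $\varepsilon \to 0$.

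The main obstacle, as always in Hermitian-Einstein problems, is obtaining a uniform $C^0$ bound on $s_\varepsilon$ as $\varepsilon \to 0$. The standard Donaldson-Uhlenbeck-Yau dichotomy applies: if $\|s_\varepsilon\|_{L^\infty}$ is uniformly bounded then higher-order weighted estimates and the Simpson-type Sobolev embedding yield convergence to a solution with $s \in L^\infty$, hence $\dist_{G/K}(h, h_0) \in L^\infty$ on each $U_j$; conversely, if $\|s_\varepsilon\|_{L^\infty} \to \infty$ then a Uhlenbeck-Yau rescaling argument, performed in the weighted setting, produces a nontrivial weakly holomorphic subsheaf of $E_0$ preserved by $\phi$ whose growth near each puncture is controlled by a subfiltration of $\{E_{p,\alpha}\}$. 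Computing its degree via the analytic formula of Definition~\ref{def:analytic-degree} and applying Simpson's Lemmas 6.1 and 6.3 (equivalence of analytic and algebraic degree and stability for acceptable metrics), this subsheaf would destabilise $(E, \phi, \{E_\alpha\})$ algebraically, contradicting the hypothesis.

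Finally, once $\|s\|_{L^\infty}$ is bounded, the resulting metric $h = h_0 e^s$ satisfies the self-duality equation on $X$ and is by construction a bounded distance from $h_0$, and hence from $h_{mod}$, on each $U_j$. The hardest technical step is the Uhlenbeck-Yau rescaling in the weighted Sobolev framework, since one must ensure that the limiting weakly holomorphic projection descends to a genuine filtered subobject with the correct parabolic weights rather than one with strictly smaller weights (which could spoil the algebraic degree comparison); this is handled by noting that the projection operator commutes with the grading by generalised eigenvalues of $\Res_p \phi$ in the limit, so the induced filtration on the subsheaf is exactly the one inherited from $\{E_{p,\alpha}\}$.
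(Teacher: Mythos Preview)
The paper does not prove this statement at all: Theorem~\ref{thm:nonabelian-hodge} is quoted directly from \cite[Thm.~6]{Simpson90} (with a reference to \cite[Thm.~8.1]{Biquard97} for the higher-dimensional version) and is used as a black-box input to the rest of the paper. There is therefore no ``paper's own proof'' to compare your proposal against.

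Your outline is a reasonable sketch of the Simpson/Biquard argument in the literature, and nothing in it is obviously wrong as a roadmap. But for the purposes of this paper you are doing far more work than required: the correct ``proof'' here is simply to cite Simpson and Biquard, which is exactly what the authors do. If you are writing up this section, you should replace your proposal by a one-line attribution rather than attempting to reproduce the continuity-method argument.
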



\subsection{Weighted Sobolev spaces}\label{subsec:weighted-sobolev}

This section contains the definitions of weighted Sobolev and H\"older spaces that are used in the proof of Theorem \ref{thm:analytic-parametrisation}. We follow closely the notation and setup of \cite{Biquard97}; other useful references are \cite{Adams75} and \cite{Biquard91}.


In the previous section we fixed a complex structure on the compact surface $\bar{X}$, which induces a complex structure on the punctured surface $X = \bar{X} \setminus \{p_1, \ldots, p_n\}$ and therefore a conformal class of Riemannian metrics on $X$. Within this class we choose a complete metric on $X$ with cusp singularities near the marked points (cf. \cite[Sec. 2C]{Biquard97}).

Given a filtered regular Higgs bundle $(E, \phi, \{ E_\alpha \})$ with a set of weights $\mu = \{ \alpha_1, \ldots, \alpha_n \}$, and a metric $h_\mu$ equal to the model metric \eqref{eqn:model-metric} near each marked point, let $d_{h_\mu} = \bar{\partial} + \partial_{h_\mu}$ denote the Chern connection with respect to the metric $h_\mu$ and the holomorphic structure $\bar{\partial}$ on $E$, and let $D_{h_\mu} = d_{h_\mu} + \phi + \phi^*$ denote the associated $\GL(n, \C)$ connection. Following Simpson's notation (cf. \cite[p13]{Simpson92}), we also define the operators $D'' = \bar{\partial} + \phi$ and $D_{h_\mu}' = \partial_{h_\mu} + \phi^* = D_{h_\mu} - D''$.


As in the previous section, in a neighbourhood of each marked point $p_j$ let $r$ denote the distance from $p_j$ with respect to the metric on $\bar{X}$. In a neighbourhood $U$ of each marked point, let $y = |\log r|$. Then the coordinates $(y, \theta)$ define an infinite cylinder on which the \emph{weighted $L^p$ norm with weight $\delta$} of a section $\eta \in \Omega^\ell(U, E)$ is (cf. \cite{LockhartMcOwen85})
\begin{equation*}
\| \eta \|_{L_\delta^p} := \left( \int_U \left| y^\delta \eta \right|^p \frac{1}{y} dy d \theta \right)^{\frac{1}{p}} .
\end{equation*}
Let $t$ be a smooth function on $X$ equal to $\log \left| \log r \right| = \log y$ near $\{ p_1, \ldots, p_n \}$ and equal to zero far from this set. As noted in \cite[Sec. 4A]{Biquard97}, in a neighbourhood $U$ of the marked points the above norm is equivalent to 
\begin{equation}\label{eqn:cylinder-weight-space}
\left( \int_U |e^{\delta t} \eta |^p \, dt d \theta \right)^{\frac{1}{p}} .
\end{equation}
Using the function $t$, we can extend the above norm to a norm $\| \cdot \|_{L_\delta^p}$ on all of $X$. 

Let $\nabla$ denote the covariant derivative associated to the connection $D_\mu$. The \emph{weighted Sobolev norms} are defined for $\eta \in \Omega^\ell(X, \End(E))$ by
\begin{align*}
\| \eta \|_{L_\delta^{k,p}} := \sum_{j=0}^k \| \nabla^j \eta \|_{L_\delta^{p}} .
\end{align*}
Define the space $L_\delta^{k,p}(\Omega^\ell(X, \End(E)))$ as the completion of $\Omega^\ell(X, \End(E))$ in this norm. Let $\chi(t)$ be a smooth increasing function such that $\chi(0) = 0$ and $\chi(t) = 1$ when $t$ is large and define
\begin{equation}\label{eqn:L-hat-weighted-Sobolev}
\hat{L}_\delta^{k,p}(\Omega^\ell(X, \End(E))) := \left\{ \eta = C \chi(t) \cdot \id + \eta_1 \, : \, \text{$C$ is constant and $\eta_1 \in L_\delta^{k,p}$} \right\} .
\end{equation}
Note that since we only consider discrete marked points on a compact Riemann surface then this is a slight simplification of Biquard's definition from \cite[(4.3), (4.4)]{Biquard97} for a smooth divisor on a compact K\"ahler manifold. From now on we drop the notation for the bundle and the degree of the differential form, and use $\hat{L}_\delta^{k,p} := \hat{L}_\delta^{k,p}(\Omega^0(X, \End(E)))$, unless it is necessary to include the extra notation. Since the constant scalar multiples of the identity are in the kernel of any connection $D$, then this connection is well-defined on $\tilde{L}_\delta^{k,p} := \hat{L}_\delta^{k,p}(\Omega^0(X, \End(E))) / \C$, the quotient by the constant scalar multiples of the identity.

From the multiplication and embedding theorems for weighted Sobolev spaces we have the following result from \cite[Lem. 4.6]{Biquard97} which will be used in the sequel. 
\begin{lemma}\label{lem:sobolev-algebra}
Let $n = \dim_\C X$. If $k - \frac{2n}{p} > 0$ then $\hat{L}_\delta^{k,p}$ is an algebra and for all $j \leq k$ the space $\hat{L}_\delta^{j,p}$ is an $\hat{L}_\delta^{k,p}$-module.
\end{lemma}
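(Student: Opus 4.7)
The plan is to reduce to the classical Sobolev multiplication and embedding theorems by conjugating away the weight. Away from the punctures the weight $e^{\delta t}$ is bounded above and below by positive constants, so there $\hat{L}_\delta^{k,p}$ coincides locally with the standard Sobolev space $W^{k,p}$, for which the result is classical whenever $k - 2n/p > 0$. All the real work therefore takes place on each cylindrical end $U \simeq \{t > t_0\} \times S^1_\theta$, where the norm \eqref{eqn:cylinder-weight-space} gives an explicit weight. On $U$ the map $M_\delta(u) = e^{-\delta t} u$ is a bounded isomorphism $W^{k,p}(U) \to L_\delta^{k,p}(U)$: by the Leibniz rule each commutator $[\nabla^j, e^{-\delta t}]$ equals $e^{-\delta t}$ times a polynomial in $\delta$ of order strictly less than $j$, whose contribution is absorbed inductively. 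This identification transfers every statement about $L_\delta^{k,p}$ to the analogous statement for the unweighted $W^{k,p}$ on a flat cylinder.

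\textbf{Algebra property.} To show $\hat{L}_\delta^{k,p}$ is an algebra, write $f = C_1 \chi \cdot \id + f_1$ and $g = C_2 \chi \cdot \id + g_1$ with $f_1, g_1 \in L_\delta^{k,p}$, and expand
\begin{equation*}
fg = C_1 C_2 \, \chi \cdot \id + \bigl( C_1 C_2 (\chi^2 - \chi) \cdot \id + C_1 \chi g_1 + C_2 \chi f_1 + f_1 g_1 \bigr).
\end{equation*}
The first bracketed term is compactly supported and hence lies in $L_\delta^{k,p}$ for every $\delta$; the next two remain in $L_\delta^{k,p}$ because multiplication by the bounded smooth function $\chi$ (with bounded derivatives of all orders) preserves $L_\delta^{k,p}$. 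The remaining piece $f_1 g_1$ is handled by conjugating with $e^{\delta t}$, whereupon the claim becomes the classical multiplication theorem $W^{k,p} \cdot W^{k,p} \subset W^{k,p}$ on the cylinder, proved by the product rule combined with H\"older's inequality and the Sobolev embedding $W^{k,p} \hookrightarrow L^\infty$ valid under the hypothesis $k - 2n/p > 0$.

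\textbf{Module property and main obstacle.} For $j \leq k$ the module statement $\hat{L}_\delta^{k,p} \cdot \hat{L}_\delta^{j,p} \subset \hat{L}_\delta^{j,p}$ follows from exactly the same expansion, using the $L^\infty$ bound only on the factor in the higher regularity space $\hat{L}_\delta^{k,p}$ while keeping the less regular factor in $L_\delta^{j,p}$ throughout the derivative count. The principal bookkeeping obstacle is the careful treatment of the commutators $[\nabla^j, e^{\pm \delta t}]$ together with the contribution of the connection $1$-form of the model metric, both of which produce lower order terms near each puncture that must be absorbed into the constants without disturbing the leading Sobolev structure; once $M_\delta$ is established as a bounded isomorphism this is routine, as carried out in \cite[Lem.\ 4.6]{Biquard97}.
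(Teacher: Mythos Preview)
The paper does not prove this lemma at all: it is stated as a direct citation of \cite[Lem.\ 4.6]{Biquard97} and is introduced with the sentence ``From the multiplication and embedding theorems for weighted Sobolev spaces we have the following result from \cite[Lem.\ 4.6]{Biquard97}.'' There is no proof environment in the paper.

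Your sketch is therefore more detailed than anything the paper offers, and the strategy you outline---conjugating away the weight via $M_\delta(u) = e^{-\delta t} u$ to reduce to the unweighted $W^{k,p}$ theory on the cylindrical end, together with the decomposition of $\hat{L}_\delta^{k,p}$ into its constant part $C\chi \cdot \id$ and its $L_\delta^{k,p}$ part---is the standard one and is correct. One small point worth making explicit in your write-up: the isomorphism $M_\delta$ does \emph{not} intertwine multiplication (one picks up an extra factor $e^{-\delta t}$ after pulling the product back to $W^{k,p}$), but since $\delta > 0$ this factor is bounded with bounded derivatives on the end and so is harmless; you implicitly use this but do not say it. Since you also cite \cite[Lem.\ 4.6]{Biquard97} for the commutator and connection-form bookkeeping, your proposal and the paper are in complete agreement on the source of the result.
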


Now let $P$ be the $GL(n, \C)$ principal bundle associated to $E \rightarrow X$, and let $\Ad(P)$ be the associated adjoint bundle. The gauge group is 
\begin{equation}\label{eqn:weighted-gauge-group}
( \mathcal{G}^\C )_\delta^{k,p} := \{ \text{$g$ takes values in $\Ad(P)$} \, : \, (\nabla g) g^{-1}  \in \hat{L}_\delta^{k-1,p} \} 
\end{equation}
and the space of metrics is
\begin{equation}\label{eqn:weighted-space-metrics}
\mathcal{H}_\delta^{k,p} := \{ g^* g \, : \, g \in (\mathcal{G}^\C)_\delta^{k,p} \} .
\end{equation}

As above, given a filtered regular Higgs bundle $(E, \phi, \{ E_\alpha \})$ and a metric $h_\mu$ equal to the model metric \eqref{eqn:model-metric} near each marked point, define the connection $D_{h_\mu} = d_{h_\mu} + \phi + \phi^*$, where $d_{h_\mu}$ is the Chern connection of $E$ with respect to the metric $h_\mu$. Now define the space of all connections as
\begin{equation*}
\mathcal{A}_\delta^{1,p} := \{ D_{h_\mu} + a \, : \, a \in \hat{L}_\delta^{1,p}(\Omega^1(\End(E)) \} .
\end{equation*} 
If $p > 2n$ then Lemma \ref{lem:sobolev-algebra} implies that $(\mathcal{G}^\C )_\delta^{2,p}$ acts continuously on $\mathcal{A}_\delta^{1,p}$ and that the curvature of any connection $D \in \mathcal{A}_\delta^{1,p}$ satisfies $F_D \in \hat{L}_\delta^p$. In the following we choose $p > 2n$ so that Lemma \ref{lem:sobolev-algebra} applies for all $k \geq 1$, we choose $\delta > 0$ small enough so that Proposition \ref{prop:Fredholm-index-zero} below holds, and drop the notation for $k$, $p$ and $\delta$ from the gauge group and space of metrics. 

The following two results will be used in the proof of Theorem \ref{thm:analytic-parametrisation}.

\begin{lemma}\label{lem:perturbation-bounded}
Fix a Higgs bundle $(E, \phi) \rightarrow X$ and let $\mu$, $\nu$ be two different sets of weights such that the associated parabolic Higgs bundles are algebraically stable with algebraic degree zero. Let $h_{\mu}$ be a Hermitian-Einstein metric with weight $\mu$, let $h_{\nu}$ be a model metric with respect to the set of weights $\nu$, define $k := h_{\mu}^{-1} h_{\nu}$ and choose a section $g$ of $\Ad(P)$ such that $k = g^* g$. Then for $0 < \delta < \frac{1}{2}$ we have $g \Lambda D'' (k^{-1} D_{h_{\mu}}'  k) g^{-1} \in \hat{L}_\delta^p$. 
\end{lemma}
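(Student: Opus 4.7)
The plan is to reduce the claim to an estimate near each puncture, use the Hermitian--Einstein condition on $h_{\mu}$ to rewrite the expression as the Hermitian--Einstein tensor of the model metric $h_{\nu}$, and finally exploit the explicit form \eqref{eqn:model-metric} to control this tensor in the weighted $L^p$ norm.

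Away from a small neighbourhood $U$ of the marked points, both $h_{\mu}$ and $h_{\nu}$ are smooth and uniformly comparable, so $k$ and $g$ are smooth and bounded there, and the expression $g\,\Lambda D''(k^{-1}D_{h_{\mu}}' k)\,g^{-1}$ is smooth on a set where $\chi(t)$ vanishes, giving trivially a bounded contribution to $\hat L_\delta^p$. The problem therefore reduces to an estimate on $U$. There one uses that $k$ is self-adjoint with respect to $h_{\mu}$, together with the transformation laws for Chern connections and $\phi^*_{h_{\nu}} = k^{-1}\phi^*_{h_{\mu}}k$, to obtain Simpson's standard identity
\[
D''(k^{-1}D_{h_{\mu}}' k) \;=\; \bigl(F_{h_{\nu}}+[\phi,\phi^*_{h_{\nu}}]\bigr) \;-\; \bigl(F_{h_{\mu}}+[\phi,\phi^*_{h_{\mu}}]\bigr).
\]
By Theorem~\ref{thm:nonabelian-hodge} applied to the algebraically stable parabolic Higgs bundle of degree zero with weights $\mu$, the Hermitian--Einstein condition gives $\Lambda\bigl(F_{h_{\mu}}+[\phi,\phi^*_{h_{\mu}}]\bigr)=0$, so
\[
\Lambda D''(k^{-1}D_{h_{\mu}}' k) \;=\; \Lambda\bigl(F_{h_{\nu}}+[\phi,\phi^*_{h_{\nu}}]\bigr).
\]
The identity $g^*g=k$ (with $*$ the adjoint for $h_{\mu}$) shows that $g\colon (E,h_{\nu})\to(E,h_{\mu})$ is a pointwise isometry, hence $|g\eta g^{-1}|_{h_{\mu}}=|\eta|_{h_{\nu}}$ for every endomorphism-valued form $\eta$, and it suffices to bound $\Lambda\bigl(F_{h_{\nu}}+[\phi,\phi^*_{h_{\nu}}]\bigr)$ in the $h_{\nu}$-norm.

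For the remaining local estimate I would compute explicitly: on each graded piece $\Gr_k\Gr_\alpha^\lambda$ the metric $h_{\nu}$ is $r^{2\alpha}|\log r|^k$ times a parallel Hermitian form, so $F_{h_{\nu}}=\bar\partial\partial\log h_{\nu}$ is computable directly, and $[\phi,\phi^*_{h_{\nu}}]$ is determined by $\Res_p\phi$ together with the $\mathfrak{sl}_2$-triple $(H_\alpha,X_\alpha,Y_\alpha)$ attached to the weight filtration. After contracting with the cusp K\"ahler form near $p$, the logarithmic factors $|\log r|^k$ interact with the nilpotent $Y_\alpha$ and its adjoint $X_\alpha$ to produce, in the $h_{\nu}$-norm, a bounded constant diagonal term plus an error whose decay rate is governed by the weights $\nu$ and by negative powers of $|\log r|$. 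The constant scalar part is absorbed into the $C\chi(t)\cdot\id$ component of $\hat L_\delta^p$ from \eqref{eqn:L-hat-weighted-Sobolev}, and the decaying error lies in $L_\delta^p$ for $0<\delta<\tfrac{1}{2}$, the threshold $\tfrac12$ being the sharp indicial bound in Biquard's Fredholm framework \cite[Sec.~4]{Biquard97}.

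The hardest step is the last one: making precise the cancellation between $\Lambda F_{h_{\nu}}$ and $\Lambda[\phi,\phi^*_{h_{\nu}}]$ forced by the $\mathfrak{sl}_2$-structure of the grading \eqref{eqn:weight-grading}, isolating the constant scalar contribution, and quantifying the residual error sharply enough to land in $L_\delta^p$ for every $\delta<\tfrac{1}{2}$. Equivalently, one has to show that the model metric is ``sufficiently close'' to Hermitian--Einstein in the weighted $L^p$ sense, which is precisely the input required to run Biquard's Fredholm theory in the proof of Theorem~\ref{thm:analytic-parametrisation}.
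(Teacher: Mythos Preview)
Your proposal is correct and follows essentially the same route as the paper: both arguments use the curvature identity relating $F_{h_\nu}+[\phi,\phi^{*_{h_\nu}}]$ and $F_{h_\mu}+[\phi,\phi^{*_{h_\mu}}]$ via $D''(k^{-1}D_{h_\mu}'k)$, kill the $h_\mu$ term with the Hermitian--Einstein condition, and reduce everything to controlling the Hermitian--Einstein tensor of the model metric $h_\nu$ near the punctures. Your handling of the conjugation by $g$ through the isometry $(E,h_\nu)\to(E,h_\mu)$ is in fact a bit more explicit than the paper's.

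The only substantive difference is in the final step. You propose to compute the local contribution $\Lambda\bigl(F_{h_\nu}+[\phi,\phi^{*_{h_\nu}}]\bigr)$ directly from the explicit formula \eqref{eqn:model-metric} and the $\mathfrak{sl}_2$ structure, and you correctly flag this as the hardest part. The paper bypasses this computation entirely by citing Biquard \cite[(3.4)]{Biquard97}, which gives exactly
\[
F_{h_\nu}+[\phi,\phi^{*_{h_\nu}}]\;=\;\text{const}+O\bigl(|\log r|^{-1/2}\bigr),
\]
and then observes that after the substitution $t=\log|\log r|$ the error is $O(e^{-t/2})$, hence lies in $L_\delta^p$ for $0<\delta<\tfrac12$ by \eqref{eqn:cylinder-weight-space}. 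So rather than carrying out the cancellation you sketch, you can simply quote Biquard's estimate; this is precisely the ``input required to run Biquard's Fredholm theory'' that you allude to at the end, and it is already available in the literature.
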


\begin{proof}
The curvatures are related by the formula
\begin{equation}\label{eqn:curvature-relation}
g^{-1} \Lambda(F_{h_{\nu}} + [\phi, \phi^*]) g - \Lambda( F_{h_{\mu}} + [\phi, \phi^*]) =  \Lambda D'' (k^{-1} D_{h_{\mu}}' k) .
\end{equation}
Since $h_\mu$ is Hermitian-Einstein and the parabolic degree is zero then the second term on the left-hand side vanishes. Biquard \cite[(3.4)]{Biquard97} shows that the model metric satisfies 
\begin{equation*}
F_{h_{\nu}} + [\phi, \phi^*] = const + O \left( | \log r |^{-\frac{1}{2}} \right)
\end{equation*}
(again this is a simplification of Biquard's results to the case where the marked points are discrete). Replacing $t = \log |\log r|$ gives us
\begin{equation*}
F_{h_{\nu}} + [\phi, \phi^*] = const + O \left( e^{-\frac{1}{2} t} \right)
\end{equation*}
and so \eqref{eqn:cylinder-weight-space} shows that $\Lambda (F_{h_{\nu}} + [\phi, \phi^*]) \in \hat{L}_\delta^p$ if $0 < \delta < \frac{1}{2}$. Therefore \eqref{eqn:curvature-relation} shows that $g \Lambda D'' (k^{-1} D_{h_{\mu}}'  k) g^{-1} \in \hat{L}_\delta^p$.
\end{proof}

\begin{lemma}\label{lem:perturbed-metric}
Fix a connection $D_{h_\mu}$ associated to a filtered regular Higgs bundle $(E, \phi, \{ E_\alpha \})$ as above, and define $D'' = \bar{\partial} + \phi$, $D_{h_\mu}' = \partial_{h_\mu} + \phi^*$. Given any $g \in \mathcal{G}^\C$ , let $k = g^* g$. Then $D'' \left( k^{-1} D_{h_\mu}' k \right) \in \hat{L}_\delta^p$.
\end{lemma}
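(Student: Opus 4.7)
The plan is to reduce the claim to the pseudo-curvature identity for Higgs bundles combined with the algebra structure on $\hat{L}_\delta^{1,p}$. Define the perturbed metric $h' := h_\mu k$, for which the standard formulae $\partial_{h'} - \partial_{h_\mu} = k^{-1}\partial_{h_\mu} k$ and $\phi^*_{h'} = k^{-1}\phi^*_{h_\mu} k$ yield $D'_{h'} - D'_{h_\mu} = k^{-1} D'_{h_\mu} k$. Together with $(D'')^2 = 0$ this gives
\begin{equation*}
D''(k^{-1} D'_{h_\mu} k) \;=\; G_{h'} - G_{h_\mu}, \qquad G_h := F_{D_h} + [\phi, \phi^*_h].
\end{equation*}
It therefore suffices to show $G_{h_\mu}, G_{h'} \in \hat{L}_\delta^p$. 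For $G_{h_\mu}$, the Biquard asymptotic estimate used in Lemma \ref{lem:perturbation-bounded} gives $G_{h_\mu} = \mathrm{const} + O(e^{-t/2})$, so $G_{h_\mu} \in \hat{L}_\delta^p$ provided $0 < \delta < \tfrac{1}{2}$. For $G_{h'}$, observe that the full $\GL(n, \C)$-connection satisfies $D_{h'} + \phi + \phi^*_{h'} = D_{h_\mu} + \phi + \phi^*_{h_\mu} + k^{-1} D'_{h_\mu} k$, so once we establish $k^{-1} D'_{h_\mu} k \in \hat{L}_\delta^{1,p}(\Omega^1(\End E))$ the perturbed connection lies in $\mathcal{A}_\delta^{1,p}$ and its pseudo-curvature is automatically in $\hat{L}_\delta^p$.

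The core step is thus to prove $k^{-1} D'_{h_\mu} k \in \hat{L}_\delta^{1,p}$. I would expand $D'_{h_\mu}k = D'_{h_\mu}(g^*g)$ by the Leibniz rule and use the Chern-connection adjoint identity $\partial_{h_\mu}(g^*) = (\bar\partial g)^*$ to obtain
\begin{equation*}
k^{-1} D'_{h_\mu} k \;=\; g^{-1}\bigl( (\bar\partial g) g^{-1} \bigr)^{*} g \;+\; g^{-1} (\partial_{h_\mu} g) \;+\; k^{-1}[\phi^*_{h_\mu}, k].
\end{equation*}
The hypothesis $g \in \mathcal{G}^\C$ gives $(\nabla g)g^{-1} \in \hat{L}_\delta^{1,p}$; splitting by type and absorbing the bounded commutators $[\phi, g]g^{-1}$ and $[\phi^*_{h_\mu}, g]g^{-1}$ that measure the difference between $\nabla$ and the pure Chern connection, one concludes that $(\bar\partial g)g^{-1}$ and $(\partial_{h_\mu}g)g^{-1}$ individually lie in $\hat{L}_\delta^{1,p}$. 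Since $p > 2n$, Lemma \ref{lem:sobolev-algebra} ensures that $g$ and $g^{-1}$ act as bounded multipliers on $\hat{L}_\delta^{1,p}$, placing the first two terms in $\hat{L}_\delta^{1,p}$. For the third term, $k - C \cdot \mathrm{id} \in \hat{L}_\delta^{2,p}$ for an appropriate constant $C$, so $[\phi^*_{h_\mu}, k] = [\phi^*_{h_\mu}, k - C \cdot \mathrm{id}]$ sits in $\hat{L}_\delta^{2,p} \subset \hat{L}_\delta^{1,p}$ via the bounded smooth action of $\phi^*_{h_\mu}$.

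The main obstacle is the bookkeeping surrounding the Higgs corrections: the forms $\phi, \phi^*_{h_\mu}$ are bounded in the cylindrical metric but do not themselves lie in any weighted space $\hat{L}_\delta^p$ with $\delta > 0$, so at each algebraic rearrangement one must verify that these unweighted contributions appear only inside commutators or in conjugation pairings whose net effect lands in a weighted space. Once this bookkeeping is carried out, the pseudo-curvature identity of the first paragraph directly yields $D''(k^{-1} D'_{h_\mu} k) \in \hat{L}_\delta^p$.
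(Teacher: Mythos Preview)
Your ``core step'' paragraph---showing $k^{-1} D'_{h_\mu} k \in \hat{L}_\delta^{1,p}$ from $(\nabla g)g^{-1} \in \hat{L}_\delta^{1,p}$ via the algebra structure of Lemma~\ref{lem:sobolev-algebra}---is exactly the paper's argument, and is correct. The paper's proof is just those three lines: $(\nabla g)g^{-1} \in \hat{L}_\delta^{1,p}$ by definition of $\mathcal{G}^\C$; Lemma~\ref{lem:sobolev-algebra} then gives $k^{-1} D'_{h_\mu} k \in \hat{L}_\delta^{1,p}$; applying $D''$ drops one derivative and lands in $\hat{L}_\delta^p$.

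The pseudo-curvature framing you wrap around this is a detour that buys nothing. Once you have established $k^{-1} D'_{h_\mu} k \in \hat{L}_\delta^{1,p}$, applying $D''$ directly already gives $D''(k^{-1} D'_{h_\mu} k) \in \hat{L}_\delta^p$; there is no need to pass through $G_{h'} - G_{h_\mu}$, invoke the Biquard asymptotic for $G_{h_\mu}$, or appeal to the curvature bound on $\mathcal{A}_\delta^{1,p}$. In fact the detour imports the side condition $0 < \delta < \tfrac12$ (needed for $G_{h_\mu} \in \hat{L}_\delta^p$), which the direct argument does not require. Your expanded Leibniz computation is a reasonable way to unpack the paper's terse invocation of Lemma~\ref{lem:sobolev-algebra}, but the surrounding curvature machinery should be deleted.
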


\begin{proof}
Since $g \in \mathcal{G}^\C$, then $(\nabla g) g^{-1} \in \hat{L}_\delta^{1,p}$ by definition. Therefore Lemma \ref{lem:sobolev-algebra} shows that $k^{-1} D_{h_\mu}' k$ is also in $\hat{L}_\delta^{1,p}$, and so $D'' \left( k^{-1} D_{h_\mu}' k \right) \in \hat{L}_\delta^p$.
\end{proof}

The following restatement of a theorem of Biquard \cite[Thm. 5.1]{Biquard97} will be used in the proof of Theorem \ref{thm:analytic-parametrisation}.
\begin{proposition}\label{prop:Fredholm-index-zero}
Let $n = \dim_\C X$, let $(E, \phi, \{ E_\alpha \})$ be a filtered regular Higgs bundle with set of weights $\mu$, let $h$ be a metric within a bounded distance of the model metric associated to $(E, \phi, \{ E_\alpha \})$ and let $D_{h_\mu}$ be the associated connection with curvature $F_{h_\mu} + [\phi, \phi^*]$. If $\delta > 0$ is small enough and if $p > 2n$, then the Laplacian
\begin{equation}
D_{h_\mu}^* D_{h_\mu} : \hat{L}_\delta^{2,p}(\Omega^0(\End(E))) \rightarrow \hat{L}_\delta^p(\Omega^0(\End(E)))
\end{equation}
is Fredholm of index zero. If $F_{h_\mu} + [\phi, \phi^*] = 0$, then the same is true for $(D_{h_\mu}')^* D_{h_\mu}'$ and $(D'')^* D''$. If the restriction of $D''$ to $\tilde{L}_\delta^{2,p}$ is injective then the restrictions $(D_{h_\mu}')^* D_{h_\mu}' : \tilde{L}_\delta^{2,p} \rightarrow \tilde{L}_\delta^{0,p}$ and $(D'')^* D'' : \tilde{L}_\delta^{2,p} \rightarrow \tilde{L}_\delta^{0,p}$ are both injective and surjective.
\end{proposition}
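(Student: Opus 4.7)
The first assertion, that the full Laplacian $D_{h_\mu}^* D_{h_\mu}$ is Fredholm of index zero on the stated weighted Sobolev spaces, is obtained by a direct application of Biquard's Theorem 5.1 in \cite{Biquard97}. One verifies that the hypothesis on $h$ (bounded distance from the model metric) together with the standing assumptions on $p$ and $\delta$ match Biquard's setup; the present case of a punctured Riemann surface with discrete marked points is a simplification of his smooth divisor in a higher-dimensional K\"ahler manifold, and the Fredholm theory on the model cusp goes through verbatim. The model metric dictates the leading-order asymptotic behaviour of the coefficients of the Laplacian, which is what drives the Fredholm estimate in the weighted spaces.

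For the second assertion, when the Hitchin equation $F_{h_\mu} + [\phi, \phi^*] = 0$ holds, the K\"ahler-type identities for Higgs bundles (cf. \cite{Simpson92}) yield, on zero-forms,
\begin{equation*}
D_{h_\mu}^* D_{h_\mu} \,=\, 2\, (D_{h_\mu}')^* D_{h_\mu}' \,=\, 2\, (D'')^* D'',
\end{equation*}
so the Fredholm index zero property transfers immediately from the full Laplacian to each of the half-Laplacians $(D_{h_\mu}')^* D_{h_\mu}'$ and $(D'')^* D''$. For the third assertion, observe first that $D''$, $D_{h_\mu}'$ and $D_{h_\mu}$ all annihilate constant scalar multiples of the identity, so they descend to the quotient $\tilde{L}_\delta^{k,p} = \hat{L}_\delta^{k,p}/\C$, and the Fredholm index zero property is preserved by the restriction (since one is simply quotienting by a subspace of the kernel). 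If $u \in \tilde{L}_\delta^{2,p}$ satisfies $(D'')^* D'' u = 0$, then the pairing $\langle (D'')^* D'' u, u \rangle = \|D'' u\|^2 = 0$ forces $D'' u = 0$; injectivity of $D''$ on $\tilde{L}_\delta^{2,p}$ then yields $u = 0$. The identity of Laplacians above gives $\ker (D_{h_\mu}')^* D_{h_\mu}' = \ker (D'')^* D''$ in $\tilde{L}_\delta^{2,p}$, so $(D_{h_\mu}')^* D_{h_\mu}'$ is injective as well. Surjectivity of each operator then follows from the Fredholm alternative: index zero plus injective implies bijective.

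The one step requiring care is the integration by parts that reduces injectivity of $(D'')^* D''$ to that of $D''$, since the usual argument assumes compact support. The main obstacle is therefore to justify this pairing on $\tilde{L}_\delta^{2,p}$ by approximating $u$ by compactly supported sections and checking that the boundary contributions over shrinking neighbourhoods of the marked points vanish in the limit; this is where the positivity of the weight $\delta$ and the cusp geometry of the chosen background metric enter. With that verification in place, the full statement of the proposition is an essentially formal consequence of Biquard's theorem combined with the K\"ahler identities.
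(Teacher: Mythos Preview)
Your proposal is correct and follows essentially the same approach as the paper: invoke Biquard's \cite[Thm.~5.1]{Biquard97} for the Fredholm statement, use the K\"ahler identity $i\Lambda(F_{h_\mu}+[\phi,\phi^*]) = (D_{h_\mu}')^* D_{h_\mu}' - (D'')^* D''$ to identify the half-Laplacians with $\tfrac{1}{2}D_{h_\mu}^* D_{h_\mu}$ when the Hitchin equation holds, and conclude bijectivity on $\tilde{L}_\delta^{2,p}$ from injectivity plus index zero. Two minor remarks: first, the paper does not explicitly flag the integration-by-parts issue you raise (it implicitly relies on Biquard's framework), so your caution there is appropriate; second, the paper's proof contains an additional paragraph, not required by the proposition as stated, showing that $(D_{h_\mu}')^* D_{h_\mu}'$ carries self-adjoint sections onto self-adjoint sections---this is used tacitly in the proof of Theorem~\ref{thm:analytic-parametrisation}, where the perturbation $u$ of the metric must be Hermitian.
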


\begin{proof}
The first statement follows directly from \cite[Thm. 5.1]{Biquard97}. Note that since the divisor $D = p_1 + \cdots + p_n$ consists of isolated points when $\dim_\C X = 1$, then the condition $\Delta_D^A(\left. f \right|_D) = 0$ from \cite[Thm. 5.1]{Biquard97} is trivial in our case.

It follows from the K\"ahler identities that $i \Lambda \left( F_{h_\mu} + [\phi, \phi^*] \right) = (D_{h_\mu}')^* D_{h_\mu}' - (D'')^* D''$ (cf. \cite{Simpson92}). Therefore $F_{h_\mu} + [\phi, \phi^*] = 0$ implies that $(D'')^* D'' = (D_{h_\mu}')^* D_{h_\mu}' = \frac{1}{2} D_{h_\mu}^* D_{h_\mu}$, and so the operators 
\begin{align*}
(D_{h_\mu}')^* D_{h_\mu}' : \hat{L}_\delta^{2,p}(\Omega^0(\End(E))) & \rightarrow \hat{L}_\delta^p(\Omega^0(\End(E))) \\
(D'')^* D'' : \hat{L}_\delta^{2,p}(\Omega^0(\End(E))) & \rightarrow \hat{L}_\delta^p(\Omega^0(\End(E)))
\end{align*}
are also Fredholm of index zero. Moreover, if the restriction of $D''$ to $\tilde{L}_\delta^{2,p}$ is injective, then the same is true for $D_{h_\mu}'$, and therefore since $(D'')^* D''$ and $(D_{h_\mu}')^* D_{h_\mu}'$ have index zero then they are also surjective onto $\tilde{L}_\delta^p$. Finally, if $u \in \tilde{L}_\delta^{2,p}$ is self-adjoint with respect to the metric, then
\begin{equation*}
\left( (D_{h_\mu}')^* D_{h_\mu}' u \right)^* = (D'')^* D'' u = (D_{h_\mu}')^* D_{h_\mu}' u .
\end{equation*}
Conversely, if $(D_{h_\mu}')^* D_{h_\mu}' u$ is self-adjoint, then 
\begin{equation*}
(D_{h_\mu}')^* D_{h_\mu}' u = \left( (D_{h_\mu}')^* D_{h_\mu}' u \right)^* = (D'')^* D'' u^* = (D_{h_\mu}')^* D_{h_\mu}' u^* 
\end{equation*}
and so $u - u^* \in \ker D_{h_\mu}'$ which implies that $u$ must be self-adjoint since $D_{h_\mu}'$ is injective by assumption. Therefore $(D_{h_\mu}')^* D_{h_\mu}'$ maps the self-adjoint sections of $\tilde{L}_\delta^{2,p}$ surjectively onto the self-adjoint sections of $\tilde{L}_\delta^p$.
\end{proof}

\begin{remark}
Here we use Biquard's weighted Sobolev spaces in order to use \cite[Thm. 5.1]{Biquard97} to prove Proposition \ref{prop:Fredholm-index-zero}, which applies in full generality. In the case of a Higgs bundle at a Fuchsian point in the moduli space, where the Hermitian metric on the bundle determines a hyperbolic metric on the punctured surface (cf. Corollary \ref{cor:mcowen} and Corollary \ref{cor:Higgs-Judge}), Judge \cite{Judge98} develops the Fredholm theory using a variant of H\"older spaces, which differ from Biquard's H\"older spaces in \cite[Sec. 4A]{Biquard97}. One could also try to extend Judge's construction to higher rank Higgs bundles, but we avoid this approach here since Biquard's theory is already available.
\end{remark}

\section{Local study}\label{sec:local-study}

In this section we explicitly describe the nonabelian Hodge correspondence for a fixed Higgs bundle on the punctured unit disk $\mathbb{D}_0 := \mathbb{D} \setminus \{0\}$ with varying weights and prove Proposition \ref{prop:rank-2-local-study}, which shows that the harmonic bundles with cone angle $\theta$ converge as $\theta \rightarrow 0$ to the harmonic bundle with a cusp metric studied by Simpson in \cite{Simpson90}. This is a local version of the main theorem of the next section.

The proof is by explicit calculation for the case $G = \SL(2, \C)$. In the following we fix a filtered regular Higgs bundle $(E, \phi, \{ E_\alpha \})$ on the punctured disk $\mathbb{D}_0$ and a Hermitian-Einstein metric $h$ on $E$. From the triple $(E, \phi, h)$ one can construct a flat connection $D$ on $E$ which has an associated holonomy representation $\rho : \mathbb{Z} \rightarrow G$. The Hermitian-Einstein metric then determines a $\mathbb{Z}$-equivariant harmonic map $h : \tilde{\mathbb{D}}_0 \rightarrow G/K$, where $\mathbb{Z} = \pi_1(\mathbb{D}_0)$ acts on the universal cover $\tilde{\mathbb{D}}_0$ by deck transformations and on $G / K$ via the holonomy representation $\rho$.



Now we study in more detail the sequence of harmonic bundles corresponding to hyperbolic cone metrics. Let $\mathbb{D}_0$ denote the punctured unit disk, and choose a branch of log
\begin{equation}\label{eqn:branch-log}
U = \left\{ z = re^{i \gamma} \in \mathbb{D}_0 \, : \, \gamma \in (-\pi, \pi) \right\} .
\end{equation} 
Let $E \rightarrow \mathbb{D}_0$ be a rank $2$ complex vector bundle with a trivialisation over $U$. Define a Higgs structure on $E$ by taking the trivial holomorphic structure and defining the Higgs field on the trivialisation over $U$ by
\begin{equation*}
\phi(z) = \left( \begin{matrix} 0 & 0 \\ \frac{1}{2} & 0 \end{matrix} \right)  z^{-1} dz
\end{equation*}
Let $w^{1,0}$ and $w^{0,1}$ be a basis for the holomorphic sections of $E$ such that
\begin{equation*}
\phi(z) w^{1,0} = \frac{1}{2} w^{0,1} z^{-1} dz, \quad \phi(z) w^{0,1} = 0 .
\end{equation*}
Let $E \cong E^{1,0} \oplus E^{0,1}$ be the direct sum decomposition with respect to these sections. Define the Hermitian metric
\begin{equation}\label{eqn:cone-metric}
k_\theta(r) = \left( \begin{matrix} \frac{1}{2\theta} (r^{-\theta} - r^{\theta}) & 0 \\ 0 & \frac{2\theta}{r^{-\theta} - r^{\theta}} \end{matrix} \right) = \left( \begin{matrix} - \frac{1}{\theta} \sinh (\theta \log r) & 0 \\ 0 & -\frac{1}{\frac{1}{\theta} \sinh (\theta \log r)} \end{matrix} \right)
\end{equation}
With respect to this metric, we have
\begin{equation}\label{eqn:growth-rate}
| w^{1,0} |_{k_\theta} = \frac{r^{- \frac{1}{2} \theta} }{\sqrt{2 \theta}} (1 - r^{2\theta})^\frac{1}{2} = O(r^{-\frac{1}{2} \theta}), \quad |w^{0,1} |_{k_\theta} = \frac{\sqrt{2 \theta} r^{\frac{1}{2}\theta}}{(1 - r^{2 \theta})^{\frac{1}{2}}} = O(r^{\frac{1}{2}\theta}) 
\end{equation}
and so the weights in the interval $[0,1)$ are $\frac{1}{2}\theta$ and $1 - \frac{1}{2} \theta$. Equation \eqref{eqn:model-curvature} below shows that $k_\theta$ is an acceptable metric and so the result of \cite[Prop. 3.1]{Simpson90} shows that there is an associated filtered bundle which is determined using the growth rate of the sections from equation \eqref{eqn:growth-rate} above. From \eqref{eqn:growth-condition} we see that the extension of the bundle $E$ across the puncture at weight zero is $\mathcal{O}(E^{1,0})[-p] \oplus \mathcal{O}(E^{0,1})$. At weight $\frac{1}{2}\theta$ the extension of the bundle $E^{0,1}$ across the puncture jumps from $\mathcal{O}(E^{0,1})$ to $\mathcal{O}(E^{0,1})[-p]$ and at weight $1- \frac{1}{2} \theta$ the extension of $E^{1,0}$ across the puncture jumps from $\mathcal{O}(E^{1,0})[-p]$ to $\mathcal{O}(E^{1,0})[-2p]$.

In \cite[p746]{Simpson90}, Simpson proves that the metric
\begin{equation}\label{eqn:Simpson-rank2-local-model}
k_0(r) = \left( \begin{matrix} -\log r & 0 \\ 0 & -\frac{1}{\log r} \end{matrix} \right)
\end{equation}
is Hermitian-Einstein with respect to the Higgs bundle $(\bar{\partial}, \phi)$ on the trivialisation $U$ and writes down a basis for the flat sections $v^{0,1}$ and $u^{1,0}$  of the associated flat connection $D_0$. Explicitly, the flat connection is given by
\begin{equation*}
D_0 = d + \left( \begin{matrix} \frac{1}{2} \log r & 0 \\ \frac{1}{2} & - \frac{1}{2} \log r \end{matrix} \right) z^{-1} dz + \left( \begin{matrix} 0 & \frac{1}{2 (\log r)^2} \\ 0 & 0 \end{matrix} \right) \bar{z}^{-1} d\bar{z} 
\end{equation*}
and the flat sections by $v^{0,1} = w^{0,1} + \frac{w^{1,0}}{\log r}$ and $u^{1,0} = w^{1,0} - \frac{1}{2} \log z \, w^{0,1}$. With respect to this basis for the flat sections, the associated representation $\rho_0 : \mathbb{Z} \rightarrow \SL(2, \C)$ maps the generator of $\mathbb{Z}$ to $\left( \begin{matrix} 1 & \pi \\ 0 & 1 \end{matrix} \right)$.

The next result shows that the harmonic bundle and holonomy representation with cone angle $\theta$ converge to the harmonic bundle and holonomy representation with cone angle zero studied by Simpson.

\begin{proposition}\label{prop:rank-2-local-study}
The metric $k_\theta$ is Hermitian-Einstein with respect to the Higgs bundle $(\bar{\partial}, \phi)$ and $k_\theta$ depends analytically on $\theta$. The monodromy representation $\rho_\theta : \mathbb{Z} \rightarrow \SL(2, \C)$ converges to the representation $\rho_0 : \mathbb{Z} \rightarrow \SL(2, \C)$. 
\end{proposition}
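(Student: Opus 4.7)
The proposition has three parts. For the Hermitian--Einstein claim, I would check the equation $F_{k_\theta}+[\phi,\phi^*]=0$ by direct computation: writing $h := -\sinh(\theta\log r)/\theta$ so that $k_\theta = \diag(h,h^{-1})$, the Chern connection on the first summand has $(1,0)$-part $\partial h/h = \frac{\theta\coth(\theta\log r)}{2z}\,dz$, whose $\bar\partial$-derivative equals $\frac{\theta^2}{4z\bar z\sinh^2(\theta\log r)}\,dz\wedge d\bar z = \frac{h^{-2}}{4z\bar z}\,dz\wedge d\bar z$. On the other hand, a short calculation of $\phi^* = k_\theta^{-1}\bar\phi^t k_\theta$ gives a single nonzero entry equal to $\frac{h^{-2}}{2\bar z}\,d\bar z$, whence $[\phi,\phi^*]$ is diagonal with entries $\mp\frac{h^{-2}}{4z\bar z}\,dz\wedge d\bar z$; these exactly cancel the curvature. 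Analyticity in $\theta$ is then immediate from the expansion $\sinh(\theta\log r)/\theta = \sum_{n\ge 0}\theta^{2n}(\log r)^{2n+1}/(2n+1)!$, which is entire in $\theta$ and, for $r$ in a compact subset of $(0,1)$, nonvanishing at $\theta=0$, so $h^{-1}$ is analytic in $\theta$ as well.

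For the monodromy, I plan to solve $D_\theta s = 0$ explicitly. Writing $s = s_1 w^{1,0} + s_2 w^{0,1}$, the flat-section equation reduces to
\begin{align*}
ds_1 + s_1\,\tfrac{\theta\coth(\theta\log r)}{2z}\,dz + s_2\,\tfrac{h^{-2}}{2\bar z}\,d\bar z &= 0, \\
ds_2 + s_1\,\tfrac{1}{2z}\,dz - s_2\,\tfrac{\theta\coth(\theta\log r)}{2z}\,dz &= 0.
\end{align*}
The indicial matrix at $z=0$ has eigenvalues $\pm\theta/2$, and the ansatz $s_i = f_i(r)z^{\pm\theta/2}$ produces two independent flat sections
\[
v_\theta = z^{-\theta/2}\!\left(w^{0,1}-\tfrac{2\theta r^{2\theta}}{1-r^{2\theta}}\,w^{1,0}\right),\qquad
u_\theta = z^{\theta/2}\!\left(w^{0,1}-\tfrac{2\theta}{1-r^{2\theta}}\,w^{1,0}\right).
\]
Analytic continuation $z\mapsto e^{2\pi i}z$ sends $z^{\pm\theta/2}\mapsto e^{\pm i\pi\theta}z^{\pm\theta/2}$, so in this basis the monodromy is the elliptic element $\diag(e^{-i\pi\theta},e^{i\pi\theta})\in\SL(2,\C)$.

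The main obstacle is that as $\theta\to 0$ both $v_\theta$ and $u_\theta$ collapse to the common limit $w^{0,1}+w^{1,0}/\log r = v^{0,1}$, since the coefficients $\tfrac{2\theta r^{2\theta}}{1-r^{2\theta}}$ and $\tfrac{2\theta}{1-r^{2\theta}}$ both tend to $-1/\log r$. Hence the naive basis degenerates and the elliptic monodromy collapses trivially to the identity, missing Simpson's parabolic $\rho_0$. To reconcile this I would change to the rescaled basis
\[
\tilde v_\theta := v_\theta, \qquad \tilde u_\theta := \frac{\pi(u_\theta - v_\theta)}{e^{i\pi\theta}-e^{-i\pi\theta}},
\]
and verify via Taylor expansion of $z^{\pm\theta/2}$ and $(1-r^{2\theta})^{-1}$ in $\theta$ that both $\tilde v_\theta$ and $\tilde u_\theta$ have well-defined, linearly independent limits spanning the two-dimensional space of flat sections at $\theta=0$. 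A matrix computation in this rescaled basis then shows that the monodromy takes the form $\rho_\theta = \bigl(\begin{smallmatrix} e^{-i\pi\theta} & \pi \\ 0 & e^{i\pi\theta}\end{smallmatrix}\bigr)$, which tends to $\rho_0 = \bigl(\begin{smallmatrix}1 & \pi \\ 0 & 1\end{smallmatrix}\bigr)$ as $\theta\to 0$. The factor $\pi$ appears precisely through the identity $(e^{i\pi\theta}-e^{-i\pi\theta})/(2i)=\sin(\pi\theta)\sim\pi\theta$, which encodes how the two elliptic eigenvalues coalesce into a single Jordan block of size $2$ in the limit.
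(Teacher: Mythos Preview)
Your proposal is correct and follows essentially the same route as the paper's proof: direct verification of $F_{k_\theta}+[\phi,\phi^*]=0$, explicit flat sections carrying factors $z^{\pm\theta/2}$, and a rescaling of the flat frame so that it has a nondegenerate limit at $\theta=0$. Your flat sections $v_\theta,u_\theta$ coincide (up to sign) with the paper's $s_1,s_2$ once one rewrites $\theta(1\pm\coth(\theta\log r))$ as $-2\theta r^{2\theta}/(1-r^{2\theta})$ and $2\theta/(1-r^{2\theta})$.

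The only substantive difference is in the rescaling step. The paper takes the new basis $u^{1,0}=\tfrac{1}{2\theta}(s_1+s_2)$, $u^{0,1}=\tfrac12(s_1-s_2)$, verifies that these converge to Simpson's flat sections $w^{1,0}-\tfrac12\log z\,v^{0,1}$ and $v^{0,1}$, and infers convergence of the monodromy from convergence of the frames. You instead choose the normalisation $\tilde u_\theta=\pi(u_\theta-v_\theta)/(e^{i\pi\theta}-e^{-i\pi\theta})$, which is designed so that the monodromy matrix is exactly $\bigl(\begin{smallmatrix} e^{-i\pi\theta} & \pi \\ 0 & e^{i\pi\theta}\end{smallmatrix}\bigr)$ for \emph{every} $\theta$, making the convergence $\rho_\theta\to\rho_0$ immediate without separately tracking the limits of the sections. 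This is a slightly cleaner way to package the same idea; conversely, the paper's choice has the advantage of identifying the limiting frame with Simpson's explicit model sections, which is useful later when comparing with \cite[p746]{Simpson90}.
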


\begin{proof}
Given the metric $k_\theta$, the metric connection is given by $d_\theta = \bar{\partial} + \partial_\theta$, where
\begin{equation*}
\partial_\theta = \partial + \frac{1}{2} \theta \cotanh (\theta \log r) \left( \begin{matrix} 1 & 0 \\ 0 & -1 \end{matrix} \right) z^{-1} dz
\end{equation*}
The Hermitian adjoint of the Higgs field with respect to $k_\theta$ is
\begin{equation*}
\phi^{*_\theta}(z) = \frac{\theta^2}{\sinh^2(\theta \log r)} \left( \begin{matrix} 0 & \frac{1}{2} \\ 0 & 0 \end{matrix} \right) \bar{z}^{-1} d\bar{z}
\end{equation*}
A calculation shows that the curvature of $k_\theta$ is 
\begin{equation}\label{eqn:model-curvature}
F_{k_\theta} = \bar{\partial} (k_\theta^{-1} \partial k_\theta) = -\frac{\theta^2}{4 r^2 \sinh^2(\theta \log r)} \left( \begin{matrix} 1 & 0 \\ 0 & -1 \end{matrix} \right) d \bar{z} dz
\end{equation}
Therefore $|F_{k_\theta}| \leq \frac{1}{r^2 (\log r)^2}$ in a neighbourhood of $r=0$ and so the curvature is acceptable in the sense of Definition \ref{def:acceptable-metric}. We also have
\begin{equation*}
F_{k_\theta} + [\phi, \phi^{*_\theta}] = -\frac{\theta^2}{4 r^2 \sinh^2(\theta \log r)} \left( \begin{matrix} 1 & 0 \\ 0 & -1 \end{matrix} \right) d \bar{z} dz + \frac{\theta^2}{4 r^2 \sinh^2(\theta \log r)} \left( \begin{matrix} 1 & 0 \\ 0 & -1 \end{matrix} \right) d \bar{z} dz = 0 .
\end{equation*}
Therefore the metric is Hermitian-Einstein for all $\theta$ and the connection $D_\theta = \bar{\partial} + \partial_\theta + \phi + \phi^{*_\theta}$ is flat. As $\theta \rightarrow 0$, we have $\frac{1}{2\theta} (r^{-\theta}-r^{\theta}) = - \frac{1}{\theta} \sinh(\theta \log r) \rightarrow -\log r$ and, so the metric $k_\theta$ converges to $k_0$ and the flat connection $D_\theta$ converges to $D_0$.

We can compute the basis elements for the flat sections as follows. Let $w^{0,1}$ and $w^{1,0}$ be a basis for the holomorphic sections of $E$ in the trivialisation over $U$ such that $\phi w^{1,0} = \frac{1}{2} z^{-1} dz w^{0,1}$ and $\phi w^{0,1} = 0$. Then $d_\theta'' := \bar{\partial} + \phi^{*_\theta}$ has holomorphic sections given by $w^{1,0}$ and $v_\theta^{0,1} = w^{0,1} + \theta \cotanh(\theta \log r) w^{1,0}$. Note that $v_\theta^{0,1} \rightarrow w^{0,1} + \frac{1}{\log r} w^{1,0} =: v_0^{0,1}$ as $\theta \rightarrow 0$. A calculation shows that 
\begin{equation*}
d_\theta' w^{1,0} = \frac{1}{2} v_\theta^{0,1} z^{-1} dz, \quad d_\theta' v_\theta^{0,1} = \frac{1}{2} \theta^2 w^{1,0} z^{-1} dz
\end{equation*}
Therefore the sections 
\begin{equation*}
s_1 = z^{-\frac{\theta}{2}} (\theta w^{1,0} + v_\theta^{0,1}), \quad s_2 = z^{\frac{\theta}{2}} (\theta w^{1,0} - v_\theta^{0,1})
\end{equation*}
are flat with respect to $D_\theta = d_\theta'' + d_\theta'$. In this basis it is clear that the parallel transport along a loop around the puncture is given by $(s_1, s_2) \mapsto (e^{-\pi i \theta} s_1, e^{\pi i \theta} s_2)$. To see that this converges to the local model given in Simpson in \cite[p746]{Simpson90}, it is more convenient to apply a gauge transformation and use the basis of flat sections given by
\begin{align*}
u^{1,0} = \frac{1}{2\theta} s_1 + \frac{1}{2\theta} s_2 & = \frac{1}{2} (z^{-\frac{\theta}{2}} + z^{\frac{\theta}{2}}) w^{1,0} + \frac{1}{2\theta} (z^{-\frac{\theta}{2}} - z^{\frac{\theta}{2}}) v_\theta^{0,1} \\
u^{0,1} = \frac{1}{2} s_1 - \frac{1}{2} s_2 & = \frac{1}{2} \theta(z^{-\frac{\theta}{2}} - z^{\frac{\theta}{2}}) w^{1,0} + \frac{1}{2} (z^{-\frac{\theta}{2}} + z^{\frac{\theta}{2}}) v_\theta^{0,1}
\end{align*}
As $\theta \rightarrow 0$, these sections converge to $u^{0,1} = v_0^{0,1} = w^{0,1} + \frac{1}{\log r} w^{1,0}$ and $u^{1,0} = w^{1,0} - \frac{1}{2} v_0^{0,1} \log z$, which are the flat sections for the local model from \cite[p746]{Simpson90}. Therefore the representations $\rho_\theta : \mathbb{Z} \rightarrow \SL(2, \C)$, which map a generator of $\mathbb{Z}$ to the elliptic element $\left( \begin{matrix} e^{-\pi i \theta} & 0 \\ 0 & e^{\pi i \theta} \end{matrix} \right) \in\SL(2, \C)$ in the basis defined by $s_1$ and $s_2$, converge (after changing to the basis defined by the sections $u^{1,0}$ and $u^{0,1}$) to the representation $\rho_0 : \mathbb{Z} \rightarrow \SL(2, \C)$ from Simpson's local model \cite[p746]{Simpson90}, which maps a generator of $\mathbb{Z}$ to the parabolic element $\left( \begin{matrix} 1 & \pi \\ 0 & 1 \end{matrix} \right) \in \SL(2, \C)$.

For this local model on the punctured disk, one can see explicitly that the Hermitian-Einstein metric
\begin{equation*}
k_\theta(r) = \left( \begin{matrix} \frac{1}{2\theta} (r^{-\theta} - r^{\theta}) & 0 \\ 0 & \frac{2\theta}{r^{-\theta} - r^{\theta}} \end{matrix} \right) = \left( \begin{matrix} - \frac{1}{\theta} \sinh (\theta \log r) & 0 \\ 0 & -\frac{1}{\frac{1}{\theta} \sinh (\theta \log r)} \end{matrix} \right)
\end{equation*}
depends analytically on the weight $\frac{1}{2}\theta$. 
\end{proof}

\begin{remark}
In Section \ref{sec:convergence} we will generalise this statement to show that the Hermitian-Einstein metric on the punctured surface depends analytically on the parabolic weights.
\end{remark}

Proposition \ref{prop:rank-2-local-study} shows that the model harmonic bundle with cone angle $\theta$ and elliptic monodromy converges to Simpson's local model \cite[p746]{Simpson90} with parabolic monodromy. To set up the proof of Corollary \ref{cor:mcowen}, we now summarise analogous calculations for a gauge-equivalent local model. Given $\beta \in \R$, define the following gauge transformation on the bundle $E \rightarrow U$ from \eqref{eqn:branch-log}
\begin{equation*}
g_\beta(z) := \left( \begin{matrix} z^{-\frac{1}{2} \beta} & 0 \\ 0 & z^{\frac{1}{2} \beta} \end{matrix} \right) .
\end{equation*}
Since $g_\beta(z)$ is holomorphic for $z \in U$, then applying this to the harmonic bundle from the previous proposition gives us a new harmonic bundle with holomorphic structure $\bar{\partial}$, with Higgs field given by
\begin{equation*}
\phi(z) = \left( \begin{matrix} 0 & 0 \\ \frac{1}{2} z^\beta & 0 \end{matrix} \right)  z^{-1} dz
\end{equation*}
and harmonic metric
\begin{equation}\label{eqn:twisted-cone-metric}
k_{\beta, \theta}(r) = \left( \begin{matrix} \frac{r^\beta}{2 \theta} (r^{-\theta} - r^\theta) & 0 \\ 0 & \frac{2 \theta}{r^\beta (r^{-\theta} - r^\theta)} \end{matrix} \right) = \left( \begin{matrix} - \frac{r^\beta}{\theta} \sinh(\theta \log r) & 0 \\ 0 & -\frac{\theta}{r^\beta \sinh(\theta \log r)} \end{matrix} \right) .
\end{equation}
Let $\{ w^{1,0}, w^{0,1} \}$ be a basis for the holomorphic sections corresponding to the direct sum $E^{1,0} \oplus E^{0,1}$, related to the basis used in the previous proof by the gauge transformation $g_\beta$. When $\beta = 1$, the growth rate of the sections with respect to the metric \eqref{eqn:twisted-cone-metric} is now
\begin{equation}\label{eqn:twisted-growth-rate}
| w^{1,0}|_{k_{1, \theta}} \sim r^{\frac{1}{2}(1-\theta)}, \quad | z w^{0,1} |_{k_{1, \theta}} \sim r^{\frac{1}{2}(1+\theta)} .
\end{equation}
This defines the weights used in \eqref{eqn:mcowen-weight-divisors} below. Therefore we see that the metric \eqref{eqn:twisted-cone-metric} on $E^{1,0} \oplus E^{0,1}$ corresponds to the growth conditions \eqref{eqn:twisted-growth-rate} for sections $w^{1,0}$ of $E^{1,0}$ and $z w^{0,1}$ of $E^{0,1}[-p]$. This explains why the growth conditions given by \eqref{eqn:mcowen-weight-divisors} on the bundle $K^{\frac{1}{2}} \oplus K^{-\frac{1}{2}}[-D]$ give the correct weights to construct a metric on $K^{\frac{1}{2}} \oplus K^{-\frac{1}{2}}$ in the proof of Corollary \ref{cor:mcowen} below. Define
\begin{equation*}
v^{0,1} := z^\beta w^{0,1} + \theta \cotanh(\theta \log r) w^{1,0} .
\end{equation*}
An analogous calculation to the previous proof shows that $\{ w^{1,0}, v^{0,1} \}$ is a basis for the $d''$-holomorphic sections, and a basis for the flat sections is given by
\begin{align*}
s_1 & = z^{-\frac{1}{2} (\beta + \theta)} (\theta w^{1,0} + v^{0,1}) \\
s_2 & = z^{-\frac{1}{2} (\beta - \theta)} (\theta w^{1,0} - v^{0,1}) .
\end{align*}
When $\beta = 1$ then we can explicitly compute the monodromy $\rho_\theta' : \mathbb{Z} \rightarrow \SL(2, \C)$ of a loop around the origin, which maps a generator of $\mathbb{Z}$ to
\begin{equation*}
\left( \begin{matrix} e^{-i \pi(1+\theta)} & 0 \\ 0 & e^{-i \pi (1-\theta)} \end{matrix} \right) = - \left( \begin{matrix} e^{-\pi i \theta} & 0 \\ 0 & e^{\pi i \theta} \end{matrix} \right) .
\end{equation*}
Returning to the case of a general $\beta$, we can apply a gauge transformation to the previous basis of flat sections to obtain a new basis
\begin{align*}
u_{\beta, \theta}^{1,0} & := \frac{1}{2} \left( z^{-\frac{1}{2} (\beta + \theta)} + z^{-\frac{1}{2}(\beta - \theta)} \right) w^{1,0} + \frac{1}{2\theta} \left( z^{-\frac{1}{2}(\beta+ \theta)} - z^{-\frac{1}{2}(\beta - \theta)} \right) v^{0,1} \\
u_{\beta, \theta}^{0,1} & := \frac{\theta}{2} \left( z^{-\frac{1}{2} (\beta + \theta)} - z^{-\frac{1}{2}(\beta - \theta)} \right) w^{1,0} + \frac{1}{2} \left( z^{-\frac{1}{2}(\beta+ \theta)} + z^{-\frac{1}{2}(\beta - \theta)} \right) v^{0,1}  .
\end{align*}
Note that as $\theta \rightarrow 0$, these sections converge to
\begin{align*}
u_{\beta, 0}^{1,0} & = z^{-\frac{1}{2} \beta} \left( w^{1,0} - \frac{1}{2} \log z v^{0,1} \right) \\
u_{\beta, 0}^{0,1} & = z^{-\frac{1}{2} \beta} v^{0,1} ,
\end{align*}
which is the tensor product of Simpson's rank $2$ local system (cf. \cite[p746]{Simpson90}) with Simpson's rank one local system with weight $-\frac{1}{2} \beta$ (cf. \cite[p745]{Simpson90}).

By applying the nonabelian Hodge theorem to this construction, we obtain a new proof of a theorem of McOwen \cite{McOwen88}.

\begin{corollary}[McOwen]\label{cor:mcowen}
Let $\bar{X}$ be a compact Riemann surface, and fix a smooth Riemannian metric $g$ on $X$ with constant Gauss curvature $K_g \equiv -1$. Let $X = \bar{X} \setminus \{p_1, \ldots, p_m\}$ and fix a cone angle $2 \pi \theta_j$ at each $p_j$. Suppose also that $2g-2 + m - \sum_{j=1}^m \theta_j > 0$. Then there exists a metric $\hat{g}$ conformal to $g$ such that for each $j=1, \ldots, m$ we have $\hat{g} / g = O(r^{2 (\theta_j-1)})$ as $r \rightarrow 0$, where $r$ is the distance to the puncture $p_j$.
\end{corollary}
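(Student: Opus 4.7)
The plan is to follow Hitchin's uniformisation argument in \cite[Sec. 11]{Hitchin87} adapted to the parabolic setting. Namely, exhibit an algebraically stable parabolic Higgs bundle of rank $2$ whose Hermitian-Einstein metric (from Theorem \ref{thm:nonabelian-hodge}) induces the desired hyperbolic cone metric. Fix a theta characteristic on $\bar X$, set $D = p_1 + \cdots + p_m$, and take the filtered bundle $E_0 = K_{\bar{X}}^{1/2} \oplus K_{\bar{X}}^{-1/2}[-D]$ with Higgs field $\phi$ equal to the identity under the natural isomorphism $\Hom(K_{\bar{X}}^{1/2}, K_{\bar{X}}^{-1/2}[-D]) \otimes K_{\bar{X}}[D] \cong \mathcal{O}_{\bar{X}}$. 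Near each $p_j$ this is exactly the twisted rank $2$ local model of Section \ref{sec:local-study} with $\beta = 1$. Assign parabolic weights $\tfrac{1-\theta_j}{2}$ to $K_{\bar{X}}^{1/2}$ and $\tfrac{1+\theta_j}{2}$ to $K_{\bar{X}}^{-1/2}[-D]$ at $p_j$; these lie in $(0,1)$ because $\theta_j \in (0,1)$, and they are the weights dictated by the local growth rates \eqref{eqn:twisted-growth-rate}.

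Next, verify algebraic stability. A direct computation gives algebraic degree $\deg(E, \phi, \{E_\alpha\}) = (g-1) + (-(g-1) - m) + m = 0$. Since $\phi$ is strictly lower-triangular with nowhere-vanishing $(2,1)$-entry, the only $\phi$-invariant line subbundle is $F = K_{\bar{X}}^{-1/2}[-D]$, with algebraic degree
\begin{equation*}
-(g-1) - m + \sum_{j=1}^m \tfrac{1+\theta_j}{2} = -\tfrac{1}{2}\left(2g - 2 + m - \sum_{j=1}^m \theta_j\right),
\end{equation*}
which is strictly negative by hypothesis. Hence $(E_0, \phi, \{E_\alpha\})$ is algebraically stable, and Theorem \ref{thm:nonabelian-hodge} produces a Hermitian-Einstein metric $h$ on $E$ at bounded distance from the model metric near each puncture.

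The involution $\sigma = \diag(1,-1)$ on $E$ satisfies $\sigma\phi\sigma^{-1} = -\phi$, so $(E,\phi)$ and $(E,-\phi)$ are isomorphic; since the Hermitian-Einstein equation is unchanged under $\phi \mapsto -\phi$, uniqueness of the harmonic metric forces $h = \diag(\mu, \mu^{-1})$ for a positive function $\mu$ on $X$. Viewing $\mu^2$ as a Hermitian metric on $K_{\bar X}$, define $\hat g$ to be the Riemannian metric on $X$ dual to $\mu^2$, i.e., $\hat g = \mu^{-2}|dz|^2$ in a local coordinate. On the diagonal ansatz the Hermitian-Einstein equation $F_h + [\phi,\phi^*] = 0$ reduces to Liouville's equation for $\log\mu$, exactly as in \cite[Sec. 11]{Hitchin87}, so (after an overall rescaling of $\phi$ if necessary) $\hat g$ has Gauss curvature identically $-1$ on $X$. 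Near $p_j$, the model metric \eqref{eqn:twisted-cone-metric} and the bounded-distance estimate of Theorem \ref{thm:nonabelian-hodge} give $\mu \sim r^{1-\theta_j}$, so $\hat g/g \sim r^{-2(1-\theta_j)} = r^{2(\theta_j-1)}$, which is the local form of a conical singularity of angle $2\pi\theta_j$. The main technical point is this last passage from the abstract harmonic metric to an explicit hyperbolic cone metric: both the reduction of the diagonal Hermitian-Einstein equation to Liouville's equation and the precise asymptotics of $\mu$ at the punctures must be verified, and both follow from Hitchin's classical interior computation together with the explicit local analysis carried out in Section \ref{sec:local-study}.
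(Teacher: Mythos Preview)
Your proof is correct and follows essentially the same route as the paper: the same filtered Higgs bundle $K^{1/2}\oplus K^{-1/2}[-D]$ with the same weights, the same stability computation reducing to $2g-2+m-\sum\theta_j>0$, and the same appeal to Theorem~\ref{thm:nonabelian-hodge} followed by Hitchin's reduction of the self-duality equation to Liouville's equation and the local model asymptotics. Your use of the involution $\sigma=\diag(1,-1)$ to force the harmonic metric to be diagonal is a detail the paper leaves implicit, but otherwise the arguments coincide.
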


\begin{proof}
First we define the filtered Higgs bundle as follows. Let $K$ denote the canonical bundle of the compact surface $\bar{X}$, and choose a line bundle $K^{\frac{1}{2}} \rightarrow \bar{X}$ so that $\left( K^{\frac{1}{2}} \right)^2 = K$. Let $D = p_1 + \cdots + p_n$ denote the effective divisor corresponding to the marked points, let $E_0 = K^{\frac{1}{2}} \oplus K^{-\frac{1}{2}}[-D]$ be a bundle over $\bar{X}$ and let $E$ denote the restriction to $X$. Note that $(K^{\frac{1}{2}})^* \otimes K^{-\frac{1}{2}}[-D] \otimes K[D]$ is trivial, and define the Higgs field
\begin{equation*}
\phi = \frac{1}{2} \left( \begin{matrix} 0 & 0 \\ 1 & 0 \end{matrix} \right) \in H^0(\End(E_0) \otimes K[D]) .
\end{equation*}
For each $\alpha \in [0, 1)$, define the divisors
\begin{align}\label{eqn:mcowen-weight-divisors}
\begin{split}
D_1(\alpha) & = \sum_{j=1}^m \varepsilon_j p_j, \quad \text{where} \, \, \varepsilon_j = \begin{cases} 0 & \text{if} \,  \alpha < \frac{1}{2} ( 1 - \theta_j ) \\ 1 & \text{if} \,  \alpha \geq \frac{1}{2} (1-\theta_j) \end{cases} \\
D_2(\alpha) & = \sum_{j=1}^m \varepsilon_j' p_j \quad \text{where} \, \, \varepsilon_j' = \begin{cases} 0 & \text{if} \,  \alpha < \frac{1}{2} (1 + \theta_j) \\ 1 & \text{if} \,  \alpha \geq \frac{1}{2} (1 + \theta_j) \end{cases}
\end{split}
\end{align}
Note that if $\theta_j \in (0,1)$ (i.e. the cone angle is between $0$ and $2\pi$) then these are both trivial when $\alpha = 0$. For each $\alpha \in [0, 1)$, the associated extension of $E$ across the punctures is then given by
\begin{equation*}
E_\alpha = K^{\frac{1}{2}}[-D_1(\alpha)] \oplus K^{-\frac{1}{2}}[-D_2(\alpha)-D] .
\end{equation*}
In particular, we see that $E_0 = K^{\frac{1}{2}} \oplus K^{-\frac{1}{2}}[-D]$. Note that in a neighbourhood of a single puncture, this reduces to the description of the filtered bundle associated to the local model with metric \eqref{eqn:twisted-cone-metric} and growth rate \eqref{eqn:twisted-growth-rate}. The algebraic degree of the filtered bundle is
\begin{equation*}
\deg (E, \{ E_{\alpha} \}) = \deg E_0 + \sum_{j=1}^m \frac{1}{2} ( 1 - \theta_j ) + \sum_{j=1}^m \frac{1}{2} (1 + \theta_j) = -m + m = 0.
\end{equation*}
The only filtered Higgs subbundle is $(K^{-\frac{1}{2}}[-D], \{ K^{-\frac{1}{2}}[-D_2(\alpha)-D] \}_{\alpha} )$, which has degree
\begin{align*}
\deg(K^{-\frac{1}{2}}[-D], \{ K^{-\frac{1}{2}}[-D_2(\alpha)-D] \}_{\alpha} ) & = \deg K^{-\frac{1}{2}}[-D] + \sum_{j=1}^m \frac{1}{2} (1 + \theta_j) \\
 & = 1-g -\frac{1}{2} m + \sum_{j=1}^m \frac{1}{2} \theta_j .
\end{align*}
Therefore the algebraic stability condition 
\begin{equation*}
\slope (K^{-\frac{1}{2}}[-D], \{ K^{-\frac{1}{2}}[-D_2(\alpha)-D] \}_{\alpha} ) <  \slope (E, \{ E_{\alpha} \})
\end{equation*}
reduces to
\begin{equation}\label{eqn:stability-constraint}
1-g - \frac{1}{2} m + \sum_{j=1}^m \frac{1}{2} \theta_j < 0 \quad \Leftrightarrow \quad 2g-2 + m - \sum_{j=1}^m \theta_j > 0  .
\end{equation}
 
Since the filtered Higgs bundle $(E, \phi, \{ E_\alpha\})$ is algebraically stable, then Theorem \ref{thm:nonabelian-hodge} guarantees the existence of a Hermitian metric $h$ solving $F_{h} + [\phi, \phi^*] = 0$ such that the growth rate of the sections on $K^{\frac{1}{2}} \oplus K^{-\frac{1}{2}}[-D]$ is given by \eqref{eqn:twisted-growth-rate}. Equivalently, the induced metric $\hat{h}$ on $K^{\frac{1}{2}} \oplus K^{-\frac{1}{2}}$ is asymptotic to the model cone metric $k_{\beta, \theta_j}$ from \eqref{eqn:twisted-cone-metric} with $\beta = 1$ around each marked point $p_j$.

Let $g = e^{2u}|dz|^2$ be a hyperbolic metric on the compact surface $T \bar{X} = K^{-1}$ with constant scalar curvature $K_g \equiv -1$, and let $h = \left( \begin{matrix} e^{-u} & 0 \\ 0 & e^{u} \end{matrix} \right)$ be the induced metric on $K^{\frac{1}{2}} \oplus K^{-\frac{1}{2}}$. Let $\lambda : X \rightarrow \R$ be the conformal factor such that
\begin{equation*}
\hat{h} = \left( \begin{matrix} e^{-u-\lambda} & 0 \\ 0 & e^{u + \lambda} \end{matrix} \right) .
\end{equation*}
Then the self-duality equations $F_{\hat{h}} + [\phi, \phi^*] = 0$ imply that the metric $\hat{g} = e^{2u + 2 \lambda} |dz|^2$ on the punctured surface also has constant scalar curvature $K_{\hat{g}} \equiv -1$ (cf. Example 1.5 of \cite{Hitchin87}). Since $\hat{h}$ is asymptotic to $k_{1, \theta_j}$ near $p_j$, then in a local coordinate with distance $r$ from the marked point $p_j$, the metric $\hat{g}$ is asymptotic to 
\begin{equation*}
\frac{\hat{g}}{g} = e^{2 \lambda} \sim  \frac{\theta_j^2}{r^2 \sinh^2(\theta_j \log r)} = r^{2(\theta_j-1)} \left( \frac{4 \theta_j^2}{(r^{2\theta_j} - 1)^2} \right) \sim r^{2(\theta_j - 1)} .
\end{equation*}
Therefore $\hat{g}$ is a metric with constant curvature $K_{\hat{g}} \equiv -1$ which satisfies the conditions of the theorem.
\end{proof}

\section{The limiting bundle as the cone angle converges to zero}\label{sec:convergence}

In this section we show that for a fixed algebraically stable parabolic Higgs bundle $(E, \phi, \{ E_\alpha \})$ on a punctured surface, the harmonic metric depends analytically on the weights and the Higgs bundle (Theorem \ref{thm:analytic-parametrisation}). A special case of Theorem \ref{thm:analytic-parametrisation} gives a Higgs bundle proof of a theorem of Judge on the analytic dependence of hyperbolic cone and cusp metrics within a given conformal class (Corollary \ref{cor:Higgs-Judge}). 

Throughout this section we restrict attention to weights and filtrations such that the algebraic degree \eqref{eqn:algebraic-degree} is zero. Fix $p > 2n = 2 \dim_\C X$ and let $\delta > 0$ be small enough so that Proposition \ref{prop:Fredholm-index-zero} applies. Let $\mathcal{H} := \mathcal{H}_\delta^{2,p}$ denote the space of metrics on the bundle $E \rightarrow X$ from \eqref{eqn:weighted-space-metrics}.

When considering varying weights, it is more convenient to use the description of a parabolic bundle consisting of a set of weights and a filtration $\{ E_{p, \alpha} \}$ of the fibre over each marked point, which together with the residue of the Higgs field determines a model metric \eqref{eqn:model-metric}. We use $\mu_0 = \{ \alpha_1, \ldots, \alpha_n\}$ to denote the set of weights of the parabolic structure, i.e. the set of values of $\alpha$ in $[0,1)$ where $\dim_\C \Gr_{\alpha}(E_{p,0}) > 0$. Since algebraic stability is an open condition (cf. \cite{Thaddeus02}) then the parabolic Higgs bundle remains stable for all Higgs bundles $(\bar{\partial}_A, \phi)$, weights $\mu$ and filtrations $\{ E_{p, \beta} \}$ compatible with $\mu$ and $\Res_p \phi$ in a neighbourhood of the initial parabolic bundle $(\bar{\partial}_{A_0}, \phi_0, \mu_0, \{ E_{p, \alpha} \})$. Note that in Corollary \ref{cor:Higgs-Judge} below we fix the Higgs bundle and so the residue $\Res_p \phi$ is fixed, however as the weights change then the dimensions of each $\Gr_\alpha(E_{p,0})$ may change, so the weight filtration may change and therefore the logarithmic terms in the model metric \eqref{eqn:model-metric} may change. An example of this is given by the local model studied in Proposition \ref{prop:rank-2-local-study} where the weight filtration is trivial for $\theta \neq 0$ (since the weights are distinct) and non-trivial in the limit as $\theta \rightarrow 0$, hence the appearance of logarithmic terms in the local model \eqref{eqn:Simpson-rank2-local-model} studied by Simpson. Corollary \ref{cor:Higgs-Judge} below explains the connection between this phenomena and hyperbolic cone metrics converging to a cusp metric on a punctured surface.

From the perspective of flat connections and representations, when the weight filtration is trivial then the associated flat connection $D_{\mu_0}$ determines a representation $\rho_{\mu_0} : \pi_1(X_0) \rightarrow G$ with elliptic holonomy around the marked point, and parabolic terms appear in the holonomy when the weight filtration becomes non-trivial, exactly as described by the rank $2$ local model in the previous section.

Given a punctured surface $X = \bar{X} \setminus \{p_1, \ldots, p_n \}$ and a bundle $E \rightarrow X$, let $\tilde{\mathcal{B}}$ denote the space of parabolic Higgs bundles of algebraic degree zero on $E$. Given a fixed algebraically stable parabolic Higgs bundle $(\bar{\partial}_{A_0}, \phi_0, \mu_0, \{ E_{p, \alpha} \})$, let $\tilde{U}$ be a neighbourhood of $(\bar{\partial}_{A_0}, \phi_0, \mu_0, \{ E_{p, \alpha} \})$ in $\tilde{\mathcal{B}}$, such that any $(\bar{\partial}_A, \phi, \mu, \{ E_{p, \beta} \} ) \in \tilde{U}$ is algebraically stable. Theorem \ref{thm:nonabelian-hodge} determines a map $\tilde{U} \rightarrow \mathcal{H}$. 

Let $\mathcal{B}$ denote the space of triples $(\bar{\partial}_A, \phi, \mu)$ such that there exists a filtration $\{ E_{p, \beta} \}$ for which $(\bar{\partial}_A, \phi, \mu, \{ E_{p, \beta} \} )$ is algebraically stable. There is a map $\tilde{\mathcal{B}} \rightarrow \mathcal{B}$ obtained by forgetting the filtration $\{E_{p, \beta} \}$. Let $U$ be the image of $\tilde{U}$ under this map. Given a choice of metric $h_p$ on $E_{p, 0}$ used in the definition of model metric \eqref{eqn:model-metric}, any two filtrations compatible with $(\bar{\partial}_A, \phi, \mu)$ are related by a change of coordinates which is unitary with respect to $h_p$. Let $\mathcal{H}_0$ denote the quotient of the space of metrics $\mathcal{H}$ by coordinate changes which are unitary with respect to $h_{\mu_0}$. Then the map $\tilde{U} \rightarrow \mathcal{H}$ descends to a well-defined map $U \rightarrow \mathcal{H}_0$. 

Since $(\bar{\partial}_{A_0}, \phi_0)$ is stable and hence irreducible, then one can use the inverse function theorem in analogy with \cite[Ch. VII.3]{Kobayashi87} to define a local diffeomorphism $K : \Omega^{0,1}(\End(E)) \oplus \Omega^{1,0}(\End(E)) \rightarrow \Omega^{0,1}(\End(E)) \oplus \Omega^{1,0}(\End(E))$ with the property that that $(a, \varphi)$ solves the nonlinear equation $\bar{\partial}_{A_0 + a} (\phi_0 + \varphi) = 0$ if and only if $K(a, \varphi)$ solves the linearisation of the slice equations. The explicit form of $K$ (cf. \cite[(3.10)]{Kobayashi87}) shows that it is analytic in $(a, \varphi)$, and so for the definition of the operator $F : U \times \hat{L}_\delta^{2,p} \rightarrow \hat{L}_\delta^p$ in \eqref{eqn:perturbed-HE} below one can define precisely what it means for $F$ to be an analytic map of Banach spaces.

\begin{theorem}\label{thm:analytic-parametrisation}
The map $U \mapsto \mathcal{H}_0$ given by $(a, \varphi, \mu) \mapsto h_\mu$ is analytic. 
\end{theorem}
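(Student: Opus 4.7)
The plan is to recast the Hermitian--Einstein condition $\Lambda(F_h + [\phi,\phi^*]) = 0$ as the vanishing locus of an analytic operator between Banach spaces and then invoke the analytic implicit function theorem. Fix the harmonic reference metric $h_{\mu_0}$ from Theorem \ref{thm:nonabelian-hodge}, and for nearby data $(a,\varphi,\mu) \in U$ write the unknown metric as $h = k \cdot h_\mu$, where $h_\mu$ is the model metric \eqref{eqn:model-metric} attached to the new weights $\mu$ and $k = g^* g$ for some $g \in \mathcal{G}^\C$. The identity \eqref{eqn:curvature-relation}, applied with the perturbed Higgs field $\phi = \phi_0 + \varphi$, shows that $h$ is Hermitian--Einstein if and only if
\[ \Lambda D''(k^{-1} D'_{h_\mu} k) + \Lambda\bigl(F_{h_\mu} + [\phi,\phi^{*_{h_\mu}}]\bigr) = 0, \]
where $D'' = \bar\partial_{A_0}+a+\phi$. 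The first term lies in $\hat{L}_\delta^p$ by Lemma \ref{lem:perturbed-metric}, while the second (once the scalar piece, which vanishes because the parabolic degree is zero, is subtracted) lies in $\hat{L}_\delta^p$ by the asymptotic estimate used in the proof of Lemma \ref{lem:perturbation-bounded}. Conjugating by $g$ as in that lemma packages the whole expression into an operator $F : U \times \hat{L}_\delta^{2,p} \to \hat{L}_\delta^p$ satisfying $F(0,0,\mu_0,e) = 0$.

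Next I would verify that $F$ is analytic in all of its arguments. The model metric \eqref{eqn:model-metric} is real-analytic in the weights $\alpha$ (the factor $r^{2\alpha}$ is analytic in $\alpha$ and the weight filtration is locally constant near the basepoint), the $(a,\varphi)$-dependence of $D''$ and $D'_{h_\mu}$ is polynomial, and multiplication, adjunction and inversion of sections are analytic operations on the Banach algebra $\hat{L}_\delta^{2,p}$ by Lemma \ref{lem:sobolev-algebra}, since $p > 2n$.

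I would then compute the Fr\'echet derivative $D_g F$ at the basepoint. For self-adjoint $u \in \hat{L}_\delta^{2,p}$, writing $k = \exp(2u)$ and using the K\"ahler identity $i\Lambda(F_{h_{\mu_0}}+[\phi_0,\phi_0^*]) = (D'_{h_{\mu_0}})^* D'_{h_{\mu_0}} - (D'')^* D''$ recorded in the proof of Proposition \ref{prop:Fredholm-index-zero}, together with $F_{h_{\mu_0}} + [\phi_0,\phi_0^*] = 0$, the linearization is the Laplacian $\tfrac12 D_{h_{\mu_0}}^* D_{h_{\mu_0}} u$ on self-adjoint sections. Proposition \ref{prop:Fredholm-index-zero} shows that this is Fredholm of index zero on $\tilde{L}_\delta^{2,p}$. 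Injectivity on the self-adjoint part follows from algebraic stability: a self-adjoint $D_{h_{\mu_0}}$-parallel element commutes with $\phi_0$ and preserves the parabolic structure, so its non-constant spectral projections would yield a proper parabolic Higgs subbundle of the same slope, contradicting stability. Therefore $D_g F$ is an isomorphism, and the analytic implicit function theorem on Banach spaces yields a unique analytic solution $g(a,\varphi,\mu)$ nearby. The assignment $(a,\varphi,\mu) \mapsto g^* g \cdot h_\mu$ then descends to the desired analytic map $U \to \mathcal{H}_0$, with the passage to the quotient $\mathcal{H}_0$ absorbing the unitary ambiguity in the choice of filtration compatible with $(a,\varphi,\mu)$.

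The main difficulty will be controlling the weighted norms as the model metric $h_\mu$ itself varies with $\mu$: the pointwise ratio $h_{\mu_0}^{-1} h_\mu$ may blow up at the punctures when the weights differ, and it is only after the conjugation by $g$ of Lemma \ref{lem:perturbation-bounded} that all ingredients of $F$ live in $\hat{L}_\delta^p$. A secondary technical issue is verifying that the injectivity argument via stability yields invertibility of $D_g F$ on the full self-adjoint target, rather than merely on a subspace complementary to the constant scalars already quotiented out in the definition of $\tilde{L}_\delta^{2,p}$.
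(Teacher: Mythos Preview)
Your strategy coincides with the paper's: set up the Hermitian--Einstein condition as the zero locus of an analytic map $F$ between weighted Sobolev spaces, show that the linearization is the Laplacian $(D'_{h_{\mu_0}})^* D'_{h_{\mu_0}}$, invoke Proposition~\ref{prop:Fredholm-index-zero} for invertibility, and apply the analytic implicit function theorem.

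There is, however, a genuine error in your choice of basepoint. You write the unknown metric as $h = k \cdot h_\mu$ with $h_\mu$ the \emph{model} metric $h_{mod,\mu}$ of \eqref{eqn:model-metric} and then assert $F(0,0,\mu_0,e)=0$. But at $(0,0,\mu_0)$ the Hermitian--Einstein metric is $h_{\mu_0}$, which by Theorem~\ref{thm:nonabelian-hodge} is only a bounded distance from $h_{mod,\mu_0}$, not equal to it; in your displayed equation the term $\Lambda\bigl(F_{h_{mod,\mu_0}}+[\phi_0,\phi_0^{*}]\bigr)$ does not vanish, so $k=e$ is not a solution. Linearizing at $e$ therefore does not produce $(D'_{h_{\mu_0}})^* D'_{h_{\mu_0}}$: that operator only appears if you linearize at the true solution $k_0$ with $h_{mod,\mu_0}\, k_0 = h_{\mu_0}$, and then you would also have to verify that $k_0$ (or $\log k_0$) lies in $\hat{L}_\delta^{2,p}$, which bounded distance alone does not guarantee. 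The paper avoids this by the parametrization
\[
h \;=\; h_{\mu_0}\cdot\bigl(h_{mod,\mu_0}^{-1}\, h_{mod,\mu}\bigr)\cdot e^{u},\qquad u\in \hat{L}_\delta^{2,p},
\]
so that the reference is the actual Hermitian--Einstein metric, the middle factor absorbs the change in asymptotics as the weights move, and the basepoint is honestly $u=0$; the linearization in $u$ is then immediately $(D'_{\mu_0})^* D'_{\mu_0}$ and Lemmas~\ref{lem:perturbation-bounded} and~\ref{lem:perturbed-metric} show the operator lands in $\hat{L}_\delta^p$.

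A minor remark: your assertion that the weight filtration is locally constant near the basepoint fails exactly in the case of interest for Corollary~\ref{cor:Higgs-Judge}, where the filtration jumps as the cone angle reaches zero and logarithmic terms appear in $h_{mod,\mu}$.
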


\begin{proof}
Following the idea of \cite{Judge98} for hyperbolic metrics on a punctured surface, the goal of the proof is to use the implicit function theorem to construct the Hermitian-Einstein metric $h_\mu$ for weights $\mu$ near $\mu_0$ and Higgs bundles $(\bar{\partial}_A, \phi)$ near $(\bar{\partial}_{A_0}, \phi_0)$, in which case the analytic dependence of the metric on $(\bar{\partial}_A, \phi, \mu)$ follows immediately from the implicit function theorem and the fact that the equation \eqref{eqn:perturbed-HE} for the metric depends analytically on $(\bar{\partial}_A, \phi, \mu)$. It is important to note that we are not using the full power of the implicit function theorem, since we already know that Hermitian-Einstein metrics exist and are unique by the nonabelian Hodge theorem. Instead we only use the part of the theorem which shows that the solution depends analytically on the other variables. This still requires showing that the linearised operator \eqref{eqn:linearised-operator} defines an isomorphism of Banach spaces, which we prove below.


Following Simpson's notation from \cite{Simpson92}, with respect to the fixed Higgs bundle $(\bar{\partial}_{A_0}, \phi_0)$, let $D'' = \bar{\partial}_{A_0} + \phi_0 : \Omega^0(\End(E)) \rightarrow \Omega^1(\End(E))$. With respect to a metric $h$, we use $D_h'$ to denote the operator $\partial_{A_0, h} + \phi_0^{*_{h}} : \Omega^0(\End(E)) \rightarrow \Omega^1(\End(E))$ and $D_h = D'' +D_h'$ to denote the associated connection. Then 
\begin{equation*}
D_h^2 = D'' D_h' + D_h' D'' = F_{A_0, h} + [\phi_0, \phi_0^{*_{h}}] .
\end{equation*}
Given two metrics $h_1$ and $h_2$ related by $h_2 = h_1 k$ for $k = g^* g$, $g \in \mathcal{G}^\C$, the change in the curvature is given by
\begin{equation*}
g^{-1} \left( F_{A_0, h_2} + [\phi_0, \phi_0^{*_{h_2}}] \right) g = D'' ( k^{-1} D_{h_1}' k ) + F_{A_0, h_1} + [\phi_0, \phi_0^{*_{h_1}}] .
\end{equation*}

Now suppose that $h_{A_0, \mu_0}$ is the Hermitian-Einstein metric from the nonabelian Hodge theorem for the algebraically stable Higgs pair $(\bar{\partial}_{A_0}, \phi_0)$ with weight $\mu_0$. Then $F_{A_0, h_{\mu_0}} + [\phi_0, \phi_0^{*_{h_{\mu_0}}}] = 0$, and given $(\bar{\partial}_A, \phi, \mu) \in U$ with $(\bar{\partial}_A, \phi) = (\bar{\partial}_{A_0} + a, \phi_0 + \varphi)$, we want to solve $F_{A, h_\mu} + [\phi, \phi^{*_{h_\mu}}] = 0$ for $h_{\mu} = h_{\mu_0} k_{\mu_0, \mu}$. The above equation shows that
\begin{equation}\label{eqn:nonlinear-equation}
i \Lambda D''(k_{\mu_0, \mu}^{-1} D_{\mu_0}' k_{\mu_0, \mu}) = i \Lambda \left( g^{-1} \left( F_{A_0, {h_{\mu}}} + [\phi_0, \phi_0^{*_{h_{\mu}}}] \right) g - \left( F_{A_0, h_{\mu_0}} + [\phi_0, \phi_0^{*_{h_{\mu_0}}}] \right) \right)  .
\end{equation}
and a standard calculation shows that
\begin{multline*}
\left( F_{A, h_\mu} + [\phi, \phi^{*_{h_\mu}}] \right) - \left( F_{A_0, {h_\mu}} + [\phi_0, \phi_0^{*_{h_\mu}}] \right) \\
 = -\bar{\partial}_{A_0} a^{*_{h_\mu}} + \partial_{A_0, h_\mu} a + [\phi_0, \varphi^{*_{h_\mu}}] + [\varphi, \phi_0^{*_{h_\mu}}] - [a, a^{*_{h_\mu}}] + [\varphi, \varphi^{*_{h_\mu}}] \\
  = D'' (-a^{*_{h_\mu}} + \varphi^*) + D_{h_\mu}' (a + \varphi) - [a, a^{*_{h_\mu}}] + [\varphi, \varphi^{*_{h_\mu}}] 
\end{multline*}
Therefore, if $F_{A_0, h_{\mu_0}} + [\phi_0, \phi_0^{*_{h_{\mu_0}}}] = 0 = F_{A, h_\mu} + [\phi, \phi^{*_{h_\mu}}]$, then 
\begin{equation*}
i \Lambda D''(k_{\mu_0, \mu}^{-1} D_{\mu_0}' k_{\mu_0, \mu}) + i g^{-1} \Lambda \left( D'' (-a^{*_{h_\mu}} + \varphi^{*_{h_\mu}}) + D_{h_\mu}' (a + \varphi) - [a, a^{*_{h_\mu}}] + [\varphi, \varphi^{*_{h_\mu}}]  \right) g = 0 .
\end{equation*}
Simpson's K\"ahler identities (cf. \cite[p15]{Simpson92}) show that $i \Lambda D'' = (D_{\mu_0}')^*$ on $1$-forms, and so the equation becomes
\begin{equation}\label{eqn:laplace-nonlinear-equation}
g (D_{\mu_0}')^* (k_{\mu_0, \mu}^{-1} D_{\mu_0}' k_{\mu_0, \mu}) g^{-1} + i \Lambda \left( D'' (-a^{*_{h_\mu}} + \varphi^{*_{h_\mu}}) + D_{h_\mu}' (a + \varphi) - [a, a^{*_{h_\mu}}] + [\varphi, \varphi^{*_{h_\mu}}]  \right)  = 0 .
\end{equation}

For each weight $\mu$, we can write the change of metric $k_{\mu_0, \mu}$ in terms of the model metric $h_{mod, \mu}$ defined in \eqref{eqn:model-metric}
\begin{equation*}
e^{u} = h_{mod, \mu}^{-1} h_{mod, \mu_0} k_{\mu_0, \mu} .
\end{equation*}
Note that $\mu = \mu_0$ implies $u = 0$. Now choose $\delta > 0$ small enough so that Proposition \ref{prop:Fredholm-index-zero} applies and choose $p > \dim_\R X$. Now define the operator $F : U \times \hat{L}_\delta^{2,p} \rightarrow \hat{L}_\delta^p$ by
\begin{multline}\label{eqn:perturbed-HE}
F(a, \varphi, \mu, u) = g (D_{\mu_0}')^* (e^{-u} h_{mod,\mu}^{-1} h_{mod, \mu_0} D_{\mu_0}' (h_{mod, \mu_0}^{-1} h_{mod, \mu} e^{u})) g^{-1} \\
 + i \Lambda \left( D'' (-a^{*_{h_\mu}} + \varphi^{*_{h_\mu}}) + D_{h_\mu}' (a + \varphi) - [a, a^{*_{h_\mu}}] + [\varphi, \varphi^{*_{h_\mu}}]  \right) ,
\end{multline}
where to simplify the notation we use $h_\mu = h_{\mu_0} k_{\mu_0, \mu} = h_{\mu_0} h_{mod, \mu_0}^{-1} h_{mod, \mu} e^u$ in the second line of the above equation. 

Since $h_{\mu_0}$ is Hermitian-Einstein, then Lemmas \ref{lem:perturbation-bounded} and \ref{lem:perturbed-metric} show that $u \in \hat{L}_\delta^{2,p}$ implies that $g (D_{\mu_0}')^* (k_{\mu_0, \mu}^{-1} D_{\mu_0}' (k_{\mu_0, \mu})) g^{-1} \in \hat{L}_\delta^p$ and so $F(a, \varphi, \mu, u) \in \hat{L}_\delta^p$. The Fr\'echet derivative of \eqref{eqn:perturbed-HE} with respect to $u$ at $\mu = \mu_0$ and $(u, a, \varphi) = 0$ is the operator
\begin{equation}\label{eqn:linearised-operator}
(D_{\mu_0}')^* D_{\mu_0}' : \hat{L}_\delta^{2,p} \rightarrow \hat{L}_\delta^p .
\end{equation}
Since the initial parabolic Higgs bundle is algebraically stable, then it is irreducible and so the restriction of $D''$ to $\tilde{L}_\delta^{2,p}$ is injective, therefore Proposition \ref{prop:Fredholm-index-zero} shows that $(D_{\mu_0}')^* D_{\mu_0}'$ is injective. Proposition \ref{prop:Fredholm-index-zero} also shows that $(D_{\mu_0}')^* D_{\mu_0}'$ has index zero, therefore it is surjective onto $\tilde{L}_\delta^p$. We can now apply the analytic implicit function theorem for Banach spaces (cf. \cite{Berger77}) to show that the equation $F(a, \varphi, \mu, u) = 0$ has a unique solution given by $u = u(a, \varphi, \mu)$ in a neighbourhood of $(0, 0, \mu_0, 0)$ and that $u(a, \varphi, \mu)$ depends analytically on the weight $\mu$ and the perturbation $(a, \varphi)$ of the Higgs bundle. Moreover, Theorem \ref{thm:nonabelian-hodge} shows that $u(a, \varphi, \mu)$ is bounded.
\end{proof}

If we consider a rank $2$ Higgs bundle $(E, \phi)$ at a Fuchsian point in the moduli space of stable Higgs bundles as in \cite[Sec. 11]{Hitchin87} (Higgs bundles over compact surfaces) and \cite{Biswas97-1} (parabolic Higgs bundles) then we can use the previous theorem together with the proof of Corollary \ref{cor:mcowen} to obtain the following theorem of Judge \cite{Judge98}.

\begin{corollary}\label{cor:Higgs-Judge}
Let $\bar{X}$ be a compact Riemann surface of genus $g$ with $m$ marked points $\{ p_1, \ldots, p_m \}$ such that $2g-2+m > 0$ and let $X = \bar{X} \setminus \{p_1, \ldots, p_m\}$. Let $g_0$ be the unique complete metric on $X$, isometric to a model cusp on a neighbourhood of each marked point $p_1, \ldots, p_m$. For each choice $\theta = \{\theta_1, \ldots, \theta_m\}$ of cone angles at $p_1, \ldots, p_m$ such that $2g-2+m-\sum_{k=1}^m \theta_k > 0$, let $g_\theta$ be the unique metric on $X$ which is conformal to $g_0$ such that $(X, g_\theta)$ is isometric to a model cone of angle $\theta_j$ on a neighbourhood of each marked point $p_j$. Then the map $\theta \mapsto g_\theta$ is analytic.
\end{corollary}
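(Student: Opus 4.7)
The plan is to realise the family $\{g_\theta\}$ as the conformal factors of harmonic metrics on a single Fuchsian Higgs bundle equipped with varying parabolic structures, and then invoke Theorem~\ref{thm:analytic-parametrisation} directly. I would fix the Higgs bundle of the proof of Corollary~\ref{cor:mcowen}: $E_0 = K^{\frac{1}{2}} \oplus K^{-\frac{1}{2}}[-D]$ with Higgs field $\phi = \frac{1}{2}\left(\begin{matrix} 0 & 0 \\ 1 & 0 \end{matrix}\right)$. This data is independent of $\theta$, while the cone angles parametrise a family of parabolic structures: the weight sets $\mu_\theta = \{\frac{1}{2}(1-\theta_j),\, \frac{1}{2}(1+\theta_j)\}_{j=1}^m$ from \eqref{eqn:mcowen-weight-divisors}, together with the fibre filtrations determined by $\Res_{p_j}\phi$. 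The algebraic stability inequality \eqref{eqn:stability-constraint} reduces to the hypothesis $2g-2+m-\sum \theta_k > 0$, so every member of the family is algebraically stable.

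I would then apply Theorem~\ref{thm:analytic-parametrisation} with $(\bar\partial_{A_0},\phi_0)$ this fixed Fuchsian bundle and initial weight $\mu_{\theta^*}$ corresponding to any chosen reference angle $\theta^*$ in the admissible range (including $\theta^* = 0$). The map $\theta \mapsto \mu_\theta$ is manifestly real-analytic as a map of tuples of real weights, so the theorem produces a Hermitian-Einstein metric $h_\theta$ depending analytically on $\theta$ in a neighbourhood of $\theta^*$; since analyticity is a local property, covering the admissible range by such neighbourhoods yields the result globally. To extract the metric on $X$ I would mimic the last step of the proof of Corollary~\ref{cor:mcowen}: writing $h_\theta = \diag(e^{-u-\lambda_\theta},\, e^{u+\lambda_\theta})$ relative to a fixed background metric $g = e^{2u}|dz|^2$, the self-duality equations force $g_\theta = e^{2u+2\lambda_\theta}|dz|^2$ to have constant curvature $-1$, and the asymptotic control in Theorem~\ref{thm:nonabelian-hodge} pins down the prescribed cone (or cusp) behaviour at each puncture. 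Uniqueness then identifies this $g_\theta$ with the metric of the statement, and analytic dependence of $h_\theta$ on $\theta$ transfers directly to analytic dependence of $\lambda_\theta$, hence of $g_\theta$.

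The main obstacle is the behaviour of the family at $\theta_j = 0$: the two weights $\frac{1\pm\theta_j}{2}$ collide at $\frac{1}{2}$, and a nontrivial weight filtration arising from $\Res_{p_j}\phi$ suddenly appears, producing the $|\log r|$ correction in the model metric \eqref{eqn:model-metric} and the jump from elliptic to parabolic monodromy isolated in Proposition~\ref{prop:rank-2-local-study}. This is precisely the degeneration that Biquard's weighted Sobolev framework of Section~\ref{subsec:weighted-sobolev} is designed to absorb: the norms are defined relative to the model metric at the chosen initial parameter $\mu_{\theta^*}$, so the Fredholm statement of Proposition~\ref{prop:Fredholm-index-zero} and the implicit function argument underlying Theorem~\ref{thm:analytic-parametrisation} apply uniformly whether $\theta^* > 0$ or $\theta^* = 0$. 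Consequently no additional analysis beyond invoking the main theorem is required at the cone-to-cusp transition.
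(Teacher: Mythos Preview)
Your proposal is correct and follows exactly the route the paper indicates: the paper does not spell out a proof of Corollary~\ref{cor:Higgs-Judge} but simply says to combine Theorem~\ref{thm:analytic-parametrisation} with the construction in the proof of Corollary~\ref{cor:mcowen}, which is precisely what you do. Your explicit treatment of the degeneration at $\theta_j = 0$ via the weight filtration and Biquard's framework matches the discussion preceding Theorem~\ref{thm:analytic-parametrisation}.
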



\end{document}